 \newtheorem{theorem}{Theorem}[section]
 \newtheorem{corollary}[theorem]{Corollary}
 \newtheorem{lemma}[theorem]{Lemma}
 \newtheorem{proposition}[theorem]{Proposition}
 \theoremstyle{definition}
 \newtheorem{example}[theorem]{Example}
 \theoremstyle{remark}
  \numberwithin{equation}{section}
\renewcommand{\phi}{\varphi}
\renewcommand{\theta}{\vartheta}
\DeclareMathOperator{\tform}{\mathfrak{t}}
\DeclareMathOperator{\wform}{\mathfrak{w}}
\DeclareMathOperator{\trace}{Tr}
\DeclarePairedDelimiterX\sipt[2]{(}{)_{\tform}}{#1\,\delimsize\vert\,#2}
\DeclarePairedDelimiterX\sipv[2]{(}{)_{v}}{#1\,\delimsize\vert\,#2}
\DeclarePairedDelimiterX\sipw[2]{(}{)_{w}}{#1\,\delimsize\vert\,#2}
\newcommand{\alg}{\mathscr{A}}
\newcommand{\abs}[1]{\lvert#1\rvert}
\newcommand{\dupN}{\mathbb{N}}
\newcommand{\seq}[1]{(#1_{n})_{n\in\dupN}}
\newcommand{\nen}{n\in\mathbb{N}}
\newcommand{\dupC}{\mathbb{C}}
\newcommand{\dom}{\operatorname{dom}}
\newcommand{\ran}{\operatorname{ran}}
\newcommand{\hil}{H}
\newcommand{\hilf}{H_f}
\newcommand{\hilg}{H_g}
\newcommand{\bh}{\mathrm{B}(\hil)}
\newcommand{\kh}{\mathrm{B}_0(\hil)}
\newcommand{\bhf}{\mathrm{B}(\hilf)}
\newcommand{\rpa}{\alg^{\sharp}}
\newcommand{\acsa}{a^{*}a}
\newcommand{\amb}{(a-b)^{*}(a-b)}
\newcommand{\bcsb}{b^{*}b}
\DeclarePairedDelimiterX\sip[2]{(}{)}{#1\,\delimsize\vert\,#2}
\DeclarePairedDelimiterX\siptilde[2]{(}{)_{\!_{\widetilde{A}}}}{#1\,\delimsize\vert\,#2}
\DeclarePairedDelimiterX\sipf[2]{(}{)_{f}}{#1\,\delimsize\vert\,#2}
\DeclarePairedDelimiterX\sipg[2]{(}{)_{g}}{#1\,\delimsize\vert\,#2}
\DeclarePairedDelimiterX\siptw[2]{(}{)_{\tform+\wform}}{#1\,\delimsize\vert\,#2}
\DeclarePairedDelimiterX\set[2]{\{}{\}}{#1\,\delimsize\vert\,#2}
\DeclarePairedDelimiterX\dual[2]{\langle}{\rangle}{#1,#2}
\DeclarePairedDelimiterX\sipa[2]{(}{)_{\!_A}}{#1\,\delimsize\vert\,#2}
\DeclarePairedDelimiterX\sipc[2]{(}{)_{\!_C}}{#1\,\delimsize\vert\,#2}
\DeclarePairedDelimiterX\sipab[2]{(}{)_{\!_{A+B}}}{#1\,\delimsize\vert\,#2}
\DeclarePairedDelimiterX\sipb[2]{(}{)_{\!_B}}{#1\,\delimsize\vert\,#2}
\newcommand{\limn}{\lim\limits_{n\rightarrow\infty}}
\newcommand{\hsh}{\mathrm{B}_2(\hil)}
\newcommand{\trh}{\mathrm{B}_1(\hil)}
\begin{document}

\title{On the order structure of representable functionals}

\author[Zs. Tarcsay]{Zsigmond Tarcsay}

\address{%
Zs. Tarcsay, Department of Applied Analysis\\ E\"otv\"os L. University\\ P\'azm\'any P\'eter s\'et\'any 1/c.\\ Budapest H-1117\\ Hungary}

\email{tarcsay@cs.elte.hu}

\author[T. Titkos]{Tam\'as Titkos}
%\fntext[Otkaszam]{Tam\'as Titkos was supported by OTKA-NKFI grant  K104206.}
\address{T. Titkos, Alfr\'ed R\'enyi Institute of Mathematics, Hungarian Academy of Sciences\\ Re\'altanoda utca 13-15.\\ Budapest H-1053\\ Hungary }
\email{titkos.tamas@renyi.mta.hu}
\thanks{Tam\'as Titkos was supported by the National Research, Development and Innovation Office, NKFIH-104206
}
%----------classification, keywords, date
\subjclass[2000]{Primary 46L51, Secondary 46K10, 47L07}

\keywords{$^*$-algebra, representable functional, Lebesgue decomposition, order structure, infimum problem, extreme points.}

\begin{abstract} The main purpose of this paper is to investigate some natural problems regarding the order structure of representable functionals on $^*$-algebras. We describe the extreme points of order intervals, and give a nontrivial sufficient condition to decide whether or not the infimum of two representable functionals exists. To this aim we offer a suitable approach to the Lebesgue decomposition theory, which is in complete analogy with the one developed by Ando in the context of positive operators. This tight analogy allows to invoke  Ando's results to characterize uniqueness of the decomposition, and solve the infimum problem over certain operator algebras.
\end{abstract}

\maketitle

\section{Introduction} 
Let $\alg$ be a not necessarily unital ${}^{\ast}$-algebra. A linear functional $f:\alg\to\mathbb{C}$ is said to be representable if there is a $^*$-representation $\pi$ of $\alg$ in a Hilbert space $\hil$ and a vector $\zeta\in\hil$ such that
\begin{align*}
f(a)=\sip{\pi(a)\zeta}{\zeta},\qquad a\in\alg.
\end{align*}
The set $\rpa$ of representable functionals on $\alg$ is partially ordered with respect to the ordering induced by positivity. The purpose of this paper is to investigate some structural questions of $\rpa$, namely to determine the extreme points of order intervals of the form $[f,g]$, and to decide whether or not the greatest lower bound of two representable functionals exists.

Analogous questions for positive operators were investigated because of their physical importance. Namely, in the standard Hilbert space model for quantum mechanics (in the so-called effect algebra), the greatest lower bound $A\wedge B$ is the greatest effect so that the probability that $A\wedge B$ is observed is not greater than the probability that both $A$ and $B$ are observed for every state of the system. Sharp quantum effects, i.e. the extreme points of the effect algebra  represent the perfectly accurate yes-no measurements in quantum probability theory. For more details we refer the reader to \cite{M-G,Pekarev}, and also to \cite{EL} for the corresponding potential theoretic aspects.

The question that whether or not the infimum $A\wedge B$ of two positive operators $A$ and $B$  exists has been a long standing open problem. The finite dimensional case was solved by Moreland and Gudder in \cite{M-G}. In the general case, a necessary and sufficient condition was given by Ando in \cite{Ando-inf} by means of the Lebesgue-type decomposition \cite{Ando}. This approach turned out to be effective also by characterizing the extreme points of the effect algebra, or more generally, of operator intervals. Since Lebesgue decomposition has a quite extensive literature also in the context of representable functionals (see \cite{Gudder,Kosaki,Szucs,Tarcsay_repr}), it seems possible to treat these questions from a similar point of view. To do so, we provide a self-contained exposition of the Lebesgue decomposition theory on general $^*$-algebras. In fact, given two representable functionals $f$ and $g$, we give a Lebesgue-type decomposition of $f$ by means of $[g]f := \sup\limits_{n\in\dupN} f : (ng)$, where $f : (ng)$ is the parallel sum of $f$ and $ng$.

The idea of obtaining the absolutely continuous part by means of the parallel sum is taken from \cite{Hassi2009} of Hassi, Sebesty\'en, and de Snoo, who developed a general Lebesgue decomposition theory for nonnegative sesquilinear forms. Actually, it is always a possibility to consider forms instead of functionals (set $\mathfrak{t}_f(a,b):=f(b^*a)$), to invoke some results concerning forms, and then to return to functionals. But we have to keep in mind that not every nonnegative form is induced by a representable functional, and hence, the last step may fail or at least may be a source of difficulties. On the other hand, the set of forms and the set of functionals may have completely different order structures. (As we shall see, this is indeed the situation in some cases, depending on the properties of $\alg$.) In order to evade all these complications and also to be self-contained we  present an exposition which avoids any reference to forms.

Let us describe the content of this paper in detail. Section \ref{S: preliminaries} is devoted to collect all the notions and notations that are needed to formulate our results. Section \ref{S: Lebesgue decomposition} includes a concise presentation of the Lebesgue decomposition theorem in the context of representable functionals. Our interpretation is of algebraic nature and follows the arguments of \cite{Hassi2009}. We then turn to describe the structure of order intervals in Section \ref{S: extremal section}. In fact, extreme points of intervals are described by means of Lebesgue decomposition, and by the notion of \lq\lq disjoint part".
Section \ref{ S: infimum} is intended to investigate the infimum  of representable functionals. In analogy with Ando's result concerning the existence of the infimum of positive operators, a necessary and sufficient condition was given in \cite{titkosinf} for sesquilinear forms. Since $^*$-algebras may have very different algebraic structures, there is no hope to give a similar satisfactory characterization in this setting. Nevertheless, we give a sufficient condition for the existence of the infimum of two representable functionals. This condition allows us to show that the infimum of two extreme points of an order interval always exists. This condition is however not necessary in general as it is demonstrated by some counterexamples. In the last section, we  discuss the uniqueness of the Lebesgue-type decomposition. According to Kosaki \cite{Kosaki}, the Lebesgue decomposition of representable functionals is not necessarily unique, even in the case of von Neumann algebras. In this aspect, we are going to provide a sufficient condition under which the aforementioned decomposition is unique and we  demonstrate examples where the condition may also be necessary. Some other examples show that there is no uniqueness over Hilbert algebras, as well as for the von Neumann algebra $\bh$, where $\hil$ an infinite dimensional Hilbert space.

%%%%%%%%%%%%%%%%%%%%%%%%%%%%%%%%%%%%%%%%%%%%%%%%%%%%%%%%%%%%%%%%%%%%%%%%%%%%%%%%%

\section{Preliminaries}\label{S: preliminaries}

Throughout this paper we fix a (not necessarily unital) $^*$-algebra $\alg$. A linear functional $f:\alg\to\mathbb{C}$ is said to be \emph{positive} if $f(a^{*}a)\geq0$ for all $a\in\alg$. Let $f$ and $g$ be positive functionals, we write $f\leq g$ if $g-f$ is positive. 

A $^*$-representation of $\alg$ in a Hilbert space $\hil$ is an algebra homomorphism from $\alg$ to $\bh$, the $C^*$-algebra of all bounded operators of $\hil$, which also preserves the involution: $\pi(a^*)=\pi(a)^*$. A linear functional $f$ on $\alg$ is said to be representable if there is a $^*$-representation $\pi$ of $\alg$ in a Hilbert space $\hil$ and a vector $\zeta\in\hil$ such that
\begin{align}\label{E: representing}
f(a)=\sip{\pi(a)\zeta}{\zeta},\qquad a\in\alg.
\end{align}
Note that a representable functional is automatically positive, i.e., $f(a^*a)\geq0$ for all $a\in\alg$; the converse is not always true in general. Recall that a positive functional $f$ on $\alg$ is representable if and only if it satisfies the following two conditions (see eg. \cite{Sebestyen84}): on the one hand, there exists a constant $M\geq0$ such that
\begin{gather}\label{E:cyclic}
    \abs{f(a)}^2\leq Mf(a^*a)\qquad \textrm{for all $a\in\alg$},
\end{gather}
and, on the other hand, for any $b\in\alg$ there exists $M_b\geq0$ such that
\begin{gather}\label{E:repr}
    f(a^*b^*ba)\leq M_{b}f(a^*a)\qquad \textrm{for all $a\in\alg$}.
\end{gather}
In that case the GNS construction provides a representing triple $(\hilf, \pi_f,\zeta_f)$ fulfilling \eqref{E: representing}. Note also that $\zeta_f$ here is the Riesz representing vector of the bounded linear functional $\pi_f(a)\zeta_f\mapsto f(a)$, whence we get the equality
\begin{gather}\label{E:hilbertbound}
    \|\zeta_f\|_f^2=\sup\big\{|f(a)|^2\,\big|\,f(\acsa)\leq 1,~a\in\alg\big\}.
\end{gather}
%%%%%%%%%%%%%%%%%%%%%%%%%%%%%%%%%%%%%%%%%%%%%%%%%%%%%%%%%%%%
In other words, $\|f\|_H:=\|\zeta_f\|_f^2$ is the smallest constant satisfying 
\begin{align*}
|f(a)|^2\leq \|f\|_Hf(a^*a) \qquad \textrm{for all $a\in\alg$},
\end{align*}
and is called the Hilbert bound of $f$. 
Recall also the important fact that representability is hereditary in the sense that if $h$ is any Hilbert bounded positive functional such that $h\leq g$ for a $g\in\rpa$ then $h$ and $g-h$ are both representable. Furthermore, $\|h\|_H\leq\|g\|_H$ holds also true in that case; for the proof see \cite[Proposition 9.4.22 (e)]{palmer}. As an easy consequence of this fact one concludes that any order bounded monotonically increasing sequence $\seq{f}$ in $\rpa$ possesses a supremum in $\rpa$. That is, if $f_n\leq f_{n+1}\leq g$ is satisfied for $\seq{f}$ and $g$ in $\rpa$ then the pointwise limit $f$ equals the least upper bound of $\seq{f}$ in  $\rpa$.

Let us recall the notions of absolute continuity and singularity of representable functionals: we say that $f$ is \emph{$g$-absolutely continuous} (we write $f\ll g$) if \begin{equation*}
g(a_n^*a_n)\to0 \quad\textrm{and} \quad f((a_n-a_m)^*(a_n-a_m))\to0\quad \textrm{imply} \quad f(a_n^*a_n)\to0
\end{equation*}
for any sequence $\seq{a}$ of $\alg$. Observe immediately that if $f$ is \emph{$g$-dominated}, i.e., $f\leq cg$ with some $c>0$ then $f$ is $g$-absolutely continuous. We say that $f$ and $g$ are \emph{singular} ($f\perp g$) if $h\leq f$ and $h\leq g$ imply $h=0$ for any representable functional $h$.

The following characterization of absolute continuity will be used frequently in the sequel. 
\begin{theorem}\label{T:closabs}
For $f,g\in\rpa$ the following assertions are equivalent:
\begin{enumerate}[\upshape (i)]
 \item $f$ is $g$-absolutely continuous;
 \item $f$ equals the pointwise limit of a monotonically increasing sequence $\seq{f}$ in $\rpa$ satisfying $f_n\leq \alpha_ng$ for each $n$ with some $\alpha_n\geq 0$. 
\end{enumerate}
\end{theorem}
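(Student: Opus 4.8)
The plan is to prove the two implications separately: (ii)$\Rightarrow$(i) is elementary and I would read it off directly from the GNS data, while (i)$\Rightarrow$(ii) carries the real content and I would route it through the parallel sums $f:(ng)$ together with a Radon--Nikodym realization of $f$ and $g$ inside a single GNS space.

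For (ii)$\Rightarrow$(i), assume $f=\lim_n f_n$ pointwise with $f_n\le f_{n+1}\le f$ and $f_n\le\alpha_n g$. Since $f_n\le f$, the assignment $\pi_f(a)\zeta_f\mapsto\pi_{f_n}(a)\zeta_{f_n}$ is well defined and contractive on $\pi_f(\alg)\zeta_f$ (because $f_n(a^*a)\le f(a^*a)$), hence extends to a contraction $T_n\colon H_f\to H_{f_n}$ with $T_n\big(\pi_f(a)\zeta_f\big)=\pi_{f_n}(a)\zeta_{f_n}$; moreover $\|T_n\xi\|_{f_n}\to\|\xi\|_f$ for every $\xi\in H_f$, since on the dense subspace $\pi_f(\alg)\zeta_f$ this is the monotone convergence $f_n(a^*a)\uparrow f(a^*a)$ and the general case follows by approximation together with $\|T_n\|\le1$. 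Now suppose $g(a_k^*a_k)\to0$ and $f\big((a_k-a_l)^*(a_k-a_l)\big)\to0$. The latter says that $(\pi_f(a_k)\zeta_f)_k$ is Cauchy, say $\pi_f(a_k)\zeta_f\to\xi$, so $f(a_k^*a_k)\to\|\xi\|_f^2$. On the other hand $\pi_{f_n}(a_k)\zeta_{f_n}=T_n\pi_f(a_k)\zeta_f\to T_n\xi$, while $\|\pi_{f_n}(a_k)\zeta_{f_n}\|_{f_n}^2=f_n(a_k^*a_k)\le\alpha_n g(a_k^*a_k)\to0$; hence $T_n\xi=0$ for every $n$, and therefore $\|\xi\|_f=\lim_n\|T_n\xi\|_{f_n}=0$. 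Thus $f(a_k^*a_k)\to0$, i.e.\ $f\ll g$.

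For (i)$\Rightarrow$(ii), I would take $f_n:=f:(ng)$. From the basic properties of the parallel sum, $f_n$ is representable, $f_n\le f$, $f_n\le ng$ and $(f_n)$ is monotone increasing; being order bounded by $f$ it then possesses a supremum $[g]f:=\sup_n f_n$ in $\rpa$. Since $[g]f$ already fulfils (ii), it suffices to show that $f\ll g$ forces $f=[g]f$. Here I would pass to the GNS space of $\rho:=f+g$: because $f\le\rho$ and $g\le\rho$ there is a unique $0\le D\le I$ on $H_\rho=\overline{\pi_\rho(\alg)\zeta_\rho}$, lying in the commutant of $\pi_\rho(\alg)$, with $f(b^*a)=\sip{D\pi_\rho(a)\zeta_\rho}{\pi_\rho(b)\zeta_\rho}$ and $g(b^*a)=\sip{(I-D)\pi_\rho(a)\zeta_\rho}{\pi_\rho(b)\zeta_\rho}$. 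Applying the parallel-sum formula to the commuting pair $D,\,n(I-D)$ identifies $f:(ng)$ with the functional of $\lambda_n(D)$, where $\lambda_n(t)=\tfrac{nt(1-t)}{\,n-(n-1)t\,}$; since $\lambda_n\uparrow$ with $\lambda_n(t)\to t$ for $t<1$ and $\lambda_n(1)=0$, monotone convergence gives $[g]f\leftrightarrow D\,\mathbf 1_{[0,1)}(D)$, so $f-[g]f$ corresponds to the spectral projection $R$ of $D$ onto its eigenvalue $1$, i.e.\ onto $\ker(I-D)$.

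It remains to see that $f\ll g$ forces $R=0$. If not, choose a unit vector $v\in\ker(I-D)$ and $a_k\in\alg$ with $\pi_\rho(a_k)\zeta_\rho\to v$; then $g(a_k^*a_k)=\|(I-D)^{1/2}\pi_\rho(a_k)\zeta_\rho\|^2\to0$ and $f\big((a_k-a_l)^*(a_k-a_l)\big)\le\|\pi_\rho(a_k)\zeta_\rho-\pi_\rho(a_l)\zeta_\rho\|^2\to0$, while $f(a_k^*a_k)=\|D^{1/2}\pi_\rho(a_k)\zeta_\rho\|^2\to\|v\|^2=1$, contradicting $f\ll g$; hence $f=[g]f$ and (ii) holds with $\alpha_n=n$. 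I expect the genuine obstacle to be the middle step of (i)$\Rightarrow$(ii) rather than this endgame: producing the Radon--Nikodym derivative $D$ in the commutant and matching the parallel sums $f:(ng)$ with functions of $D$ packages exactly the operator-theoretic input that makes Ando's programme run, so in a forms-free treatment one must either build these tools or invoke, from Section~\ref{S: Lebesgue decomposition}, that $\sup_n f:(ng)$ is the $g$-absolutely continuous part of $f$ (so that the remainder is $g$-singular, after which the sequence argument above still closes the proof). One should also work inside $\overline{\pi_\rho(\alg)\zeta_\rho}$ throughout, lest the usual non-unital cyclicity issue intrude.
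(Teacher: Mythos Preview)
Your argument is correct, but it follows a genuinely different route from the paper in both directions. For (ii)$\Rightarrow$(i) the paper does not run your direct sequence argument via the contractions $T_n:H_f\to H_{f_n}$; instead it shows that the densely defined map $J:\pi_g(a)\zeta_g\mapsto\pi_f(a)\zeta_f$ is closable by verifying that $\dom J^*$ is dense, using the Riesz representatives $C_n\in\mathscr{B}(H_f)$ of the $f_n$'s and showing $\ran C_n\subseteq\dom J^*$ together with $\bigcap_n\ker C_n=\{0\}$. Your version is shorter and avoids the adjoint altogether. For (i)$\Rightarrow$(ii) the divergence is more substantial: the paper works entirely in $H_g$, takes the closability of $J$ as the starting point, proves the intertwining relation $J^*\pi_f(a)\subseteq\pi_g(a)J^*$, and then uses the spectral calculus of the \emph{unbounded} operator $S=J^*J^{**}$ to set $f_n(a):=\sipg{S_n\pi_g(a)\zeta_g}{\zeta_g}$ with $S_n=\int_0^n t\,dE(t)$. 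No parallel sums appear. Your route through $H_{f+g}$ and the bounded Radon--Nikodym derivative $D$ in the commutant, with $f:(ng)$ identified as $\lambda_n(D)$ and $[g]f$ as $D\,\mathbf 1_{[0,1)}(D)$, is closer in spirit to Ando's operator programme and simultaneously establishes the structure of $[g]f$ that the paper only develops later in Section~\ref{S: Lebesgue decomposition}; the trade-off is that you must import the existence and variational formula for the functional parallel sum from \cite{Tarcsay_parallel} before Theorem~\ref{T:closabs}, whereas the paper's proof is self-contained at this point and postpones parallel sums. Neither dependency is circular, so both approaches are sound; the paper's buys logical independence from the parallel-sum machinery, yours buys a bounded-operator treatment and an early concrete model for $[g]f$.
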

\begin{proof}
Assume first that $f$ is $g$-absolutely continuous. This means that the map $J:\hil_g\to\hil_f$,
defined by the correspondence 
\begin{align}\label{E:J*pi}
\pi_g(a)\zeta_g\mapsto\pi_f(a)\zeta_f,\qquad a\in\alg,
\end{align}
is a well-defined closable operator. First we claim that  
\begin{align}\label{E:komm}
\pi_f(a)\langle\dom J^*\rangle\subseteq \dom J^*\quad \textrm{and}\quad J^*\pi_f(a)z=\pi_g(a)J^*z
\end{align}
hold for all $a\in\alg$ and $z\in\dom J^*$. Indeed, for $z\in\dom J^*$ and $a,x\in \alg$ we have 
\begin{align*}
\sipf{J\pi_g(x)\zeta_g}{\pi_f(a)z}&=\sipf{\pi_f(x)\zeta_f}{\pi_f(a)z}=\sipf{\pi_f(a^*x)\zeta_f}{z}\\
&=\sipf{J\pi_g(a^*x)\zeta_g}{z}=\sipg{\pi_g(a^*)\pi_g(x)\zeta_g}{J^*z}\\
&=\sipg{\pi_g(x)\zeta_g}{\pi_g(a)J^*z}.
\end{align*}
Consider the (typically unbounded) positive selfadjoint operator $S:=J^{*}J^{**}$ on $\hil_g$ and denote by $E$ its spectral measure. By \eqref{E:komm} we infer that $S$ commutes with every $\pi_g(a)$ in the usual sense that  $\pi_g(a)S\subset S\pi_g(a)$ for all $a\in\alg$.  Each $\pi_g(a)$ being bounded we can also deduce that the spectral projections of $S$  commute with the $\pi_g(a)$'s. Consequently, setting $\displaystyle S_n:=\int_0^n t\,dE(t)$ for each integer $n$ we get $\pi_g(a)S_n=S_n\pi_g(a)$ and hence 
\begin{align*}
f_n(a):=\sipg{S_n\pi_g(a)\zeta_g}{\zeta_g},\qquad a\in\alg,
\end{align*}
is a representable functional such that $f_n\leq f_{n+1}$, 
\begin{align*}
f_n(a^*a)&=\sipg{S_n^{1/2}\pi_g(a)\zeta_g}{S_n^{1/2}\pi_g(a)\zeta_g}\to \sipg{S^{1/2}\pi_g(a)\zeta_g}{S^{1/2}\pi_g(a)\zeta_g}=f(a^*a).
\end{align*}
Finally,
\begin{align*}
f_n(a^*a)=\sipg{S_n\pi_g(a)\zeta_g}{\pi_g(a)\zeta_g}\leq \|S_n\|\sipg{\pi_g(a)\zeta_g}{\pi_g(a)\zeta_g}=\|S_n\|g(a^*a).
\end{align*}
Thus $\seq{f}$ fulfills each condition of (ii). For the converse, suppose (ii). First of all observe that $J:\hil_g\to\hilf$ defined by \eqref{E:J*pi} is well defined: if $\pi_g(a)\zeta_g=0$ for some $a\in\alg$ then  $f_n(a^*a)=0$ by assumption. Hence 
\begin{align*}
\|\pi_f(a)\zeta_f\|^2_f=f(a^*a)=\limn f_n(a^*a)=0,
\end{align*}
indeed. Our claim is to show that $J$ is closable, or equivalently, that $J^*$ is densely defined. By the Riesz representation theorem, for any integer $n$ there exists a unique positive contraction $C_n\in\bhf$  such that 
\begin{equation*}
\sipf{C_n(\pi_f(a)\zeta_f)}{\pi_f(b)\zeta_f}=f_n(b^*a),\qquad a,b\in\alg.
\end{equation*}
We claim that $\ran C_n\subseteq \dom J^* $ (here $\ran$ refers to the range). Let $\xi\in\hilf$ be fixed and observe for any $a\in\alg$ that
\begin{align*}
\abs{\sipf{J\pi_g(a)\zeta_g}{C_n\xi}}^2&=\abs{\sipf{\pi_f(a)\zeta_f}{C_n\xi}}^2=\abs{\sipf{C_n(\pi_f(a)\zeta_f)}{\xi}}^2\\
 &\leq \|\xi\|^2_f\|C_n(\pi_f(a)\zeta_f)\|^2_f\\ 
 &\leq  \|\xi\|^2_f\|C_n\|\sipf{C_n(\pi_f(a)\zeta_f)}{\pi_f(a)\zeta_f}\\
&=\|\xi\|^2_f\|C_n\|f_n(a^*a)\\
&\leq \alpha_n\|\xi\|^2_f g(a^*a)= \alpha_n\|\xi\|^2\sipg{\pi_g(a)\zeta_g}{\pi_g(a)\zeta_g},
\end{align*}
whence $C_n\xi\in\dom J^*$. To conclude the statement our only duty is to verify the identity
\begin{equation}\label{E:ker}
\bigcap_{n=1}^{\infty}\ker C_n =\{0\}.
\end{equation}
To this end observe that $C_n(\pi_f(a)\zeta_f)\to\pi_f(a)\zeta_f$ as $n\to\infty$ for each $a$ in $\alg$:
\begin{align*}
\|\pi_f(a)\zeta_f-&C_n(\pi_f(a)\zeta_f)\|^2_f\\ 
&=f(a^*a)-2\sipf{C_n(\pi_f(a)\zeta_f)}{\pi_f(a)\zeta_f}+\|C_n(\pi_f(a)\zeta_f)\|^2_f\\
&\leq 2f(a^*a)-2f_n(a^*a)\to0.
\end{align*}
Consequently, $(C_n)_{n\in\dupN}$ converges strongly to the identity operator on $\hilf$. This obviously gives \eqref{E:ker}. Hence $\dom J^*$ is dense, i.e., $J$ is closable.
\end{proof}

Condition (ii) above was called absolute continuity in \cite{Ando} in the context of positive operators, and almost dominatedness in \cite{Hassi2009} for nonnegative sesquilinear forms. Condition (i) was referred to as strong absolute continuity in \cite{Gudder} and as closability in \cite{Hassi2009}. We shall follow Ando's terminology and use the phrase absolute continuity for both (i) and (ii).

The most important notion in this paper is the so called \emph{parallel sum} of two representable functionals, which was introduced in \cite{Tarcsay_parallel}.
Let $f$ and $g$ be arbitrary elements of $\rpa$ and denote by $(\hilf,\pi_f,\zeta_f)$ and $(\hil_g,\pi_g,\zeta_g)$ the GNS-triples associated with $f$ and $g$, respectively. Let $\pi$ stand for the direct sum of $\pi_f$ and $\pi_g$. Then the closure of the following subspace
\begin{equation*}%\label{E:M}
\set{\pi_f(a)\zeta_f\oplus\pi_g(a)\zeta_g}{a\in\alg}\subseteq\hilf\oplus\hil_g
\end{equation*}
     is $\pi$-invariant. If $P$ stands for the orthogonal projection onto its ortho-comp\-lement, then the functional $f:g$ defined  by the correspondence
\begin{equation*}%\label{E:f:g_def}
        (f:g)(a):=\sip{\pi(a)P(\zeta_f\oplus0)}{P(\zeta_f\oplus0)},\qquad a\in\alg,
\end{equation*}
is representable and its values on positive elements are given by
\begin{equation}\label{E:f:g_q}
        (f:g)(a^*a)=\inf\set{f((a-b)^*(a-b))+g(b^*b)}{b\in\alg},\qquad a\in\alg.
\end{equation}
For the details see \cite[Theorem 5.1]{Tarcsay_parallel}.

The following proposition contains three elementary observations regarding the parallel sum that will be used constantly in upcoming sections.
\begin{proposition}\label{T: parsprop=}
Let $f,g,f_1,f_2,g_1,g_2\in\rpa$ be representable functionals on $\alg$. Then
\begin{enumerate}[\upshape (a)]
\item $f:g\leq f$ and $f:g\leq g$,
\item $f_1\leq f_2$ and $g_1\leq g_2$ imply $f_1:g_1\leq f_2:g_2$,
\item if $g\in\rpa$ and $\alpha,\beta>0$, then $(\alpha g):(\beta g)=\frac{\alpha\beta}{\alpha+\beta}g$.
\end{enumerate}
\begin{proof}
Properties $(a)$ and $(b)$ follow easily from (\ref{E:f:g_q}), we prove only $(c)$. An elementary computation gives that
$\alpha g(\amb)+\beta g(\bcsb)$ equals to
\begin{equation*}
\frac{\alpha\beta}{\alpha+\beta} g(\acsa)+g\left(\Big(\sqrt{\alpha+\beta}b+\frac{\alpha}{ \sqrt{\alpha+\beta} }a\Big)^{*}\Big(\sqrt{\alpha+\beta}b+\frac{\alpha}{\sqrt{\alpha+\beta}}a\Big)\right).\end{equation*}
Since every term is nonnegative, the infimum is attained at $b:=\frac{-\alpha}{\alpha+\beta}a$.\end{proof}
\end{proposition}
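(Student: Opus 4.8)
The plan is to obtain all three assertions directly from the infimum representation \eqref{E:f:g_q}, exploiting that for positive functionals $\phi\le\psi$ means exactly that $\phi(\acsa)\le\psi(\acsa)$ for every $a\in\alg$. Since $f:g$ is representable (and so is $(\alpha g):(\beta g)$, being the parallel sum of the representable functionals $\alpha g$ and $\beta g$), it is enough in each case to compare values on elements of the form $\acsa$.

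For $(a)$ I would feed the two distinguished choices $b=0$ and $b=a$ into \eqref{E:f:g_q}. The first gives $(f:g)(\acsa)\le f(\acsa)$ and the second gives $(f:g)(\acsa)\le g(\acsa)$, for every $a\in\alg$; this is precisely $f:g\le f$ and $f:g\le g$. For $(b)$ I would fix $a$ and $b$ and note that $\amb$ and $\bcsb$ are of the form $c^{*}c$, so from $f_1\le f_2$ and $g_1\le g_2$ we get $f_1(\amb)+g_1(\bcsb)\le f_2(\amb)+g_2(\bcsb)$; taking the infimum over $b$ on both sides and invoking \eqref{E:f:g_q} yields $(f_1:g_1)(\acsa)\le(f_2:g_2)(\acsa)$ for all $a$, that is, $f_1:g_1\le f_2:g_2$.

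For $(c)$ the substance is a completion of squares. After expanding $\amb=\acsa-a^{*}b-b^{*}a+\bcsb$ and regrouping, I would check the identity
\begin{equation*}
\alpha\,\amb+\beta\,\bcsb=\frac{\alpha\beta}{\alpha+\beta}\,\acsa+c^{*}c,\qquad c:=\sqrt{\alpha+\beta}\,b-\frac{\alpha}{\sqrt{\alpha+\beta}}\,a,
\end{equation*}
valid in $\alg$. Applying $g$, positivity gives $g(c^{*}c)\ge0$, while $c=0$ precisely when $b=\frac{\alpha}{\alpha+\beta}a$; hence the infimum in \eqref{E:f:g_q} defining $(\alpha g):(\beta g)$ on $\acsa$ is attained at this $b$ and equals $\frac{\alpha\beta}{\alpha+\beta}g(\acsa)$. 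This is exactly the claimed equality $(\alpha g):(\beta g)=\frac{\alpha\beta}{\alpha+\beta}g$.

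I expect no genuine obstacle: everything is a direct appeal to \eqref{E:f:g_q} together with the definition of the order relation. The only step that asks for a little care is the bookkeeping in the completion-of-squares identity in $(c)$, in particular keeping track of the sign of the cross term $-\alpha(a^{*}b+b^{*}a)$ so that it is correctly absorbed into $c^{*}c$ — equivalently, noticing that the scalar minimization $\min_{t\in\dupC}\big(\alpha|1-t|^2+\beta|t|^2\big)=\frac{\alpha\beta}{\alpha+\beta}$ is attained at $t=\frac{\alpha}{\alpha+\beta}$ and then lifting this to the algebra by taking $b=ta$.
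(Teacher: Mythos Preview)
Your argument is correct and follows the same route as the paper: parts (a) and (b) are read off directly from \eqref{E:f:g_q}, and part (c) is the same completion of squares. Incidentally, your sign in the square-completion (with $c=\sqrt{\alpha+\beta}\,b-\tfrac{\alpha}{\sqrt{\alpha+\beta}}\,a$ and minimizer $b=\tfrac{\alpha}{\alpha+\beta}a$) is the correct one; the paper's displayed version carries a harmless sign slip.
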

One of the advantages of parallel addition is that it characterizes the notion of singularity. Namely, $f$ and $g$ are singular precisely when $f:g=0$. Indeed, if $f$ and $g$ are singular, then $f:g=0$ because of Proposition \ref{T: parsprop=} (a). Conversely, if $f:g=0$ and $h$ is a representable functional majored by both $f$ and $g$, then $h=2(h:h)\leq2(f:g)=0$ holds according to Proposition \ref{T: parsprop=} (b) and (c). This characterization can be found also in \cite[Theorem 5.2]{Tarcsay_parallel}.
Now, we can define the representable functional $[g]f$ by 
\begin{equation}\label{E:[g]f_def}
        [g]f:=\sup\limits_{\nen}f:(ng),
     \end{equation}
in  complete analogy with the case of bounded positive operators \cite{Ando}. Observe that $[g]f$ is $g$-absolutely continuous according to Theorem \ref{T:closabs}. Furthermore, it is clear from \eqref{E:f:g_q} and the definition of $[g]f$  that $[g]f\leq f$, $f_1\leq f_2$ and $g_1\leq g_2$ imply $[g_1]f_1\leq [g_2]f_2$, and that $[g](cf)=c[g]f$ for all $c\geq0$. 

\section{Lebesgue decomposition}\label{S: Lebesgue decomposition}
Let $f$ and $g$ be representable functionals on $\alg$. A decomposition $f=f_1+f_2$ is called a Lebesgue-type decomposition of $f$ with respect to $g$ (or shortly a \emph{$g$-Lebesgue decomposition of $f$}) if $f_1,f_2\in\rpa$, $f_1\ll g$, and $f_2\perp g$. In order to investigate some extremal problems, we present a suitable approach for this type of decomposition. This approach is in complete analogy to the one developed by Ando \cite{Ando} for positive operators, and the one given by Hassi, Sebesty\'en, and de Snoo for nonnegative sesquilinear forms \cite{Hassi2009}. 

The significance of the functional $[g]f$ in Lebesgue decomposition theory becomes clear already in the next proposition.

\begin{proposition}\label{P: reg-sing-characterization}
Let $f$ and $g$ be representable functionals on $\alg$. Then
\begin{enumerate}[\upshape (a)]
\item $f\perp g$ if and only if $[g]f=0$,
\item $f\ll g$ if and only if $[g]f=f$.
\end{enumerate}
\end{proposition}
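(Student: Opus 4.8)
The plan is to prove the four implications separately, relying on the parallel-sum characterization of singularity ($f\perp g\iff f:g=0$), on Theorem~\ref{T:closabs}, and on the elementary properties in Proposition~\ref{T: parsprop=}; throughout I will use without comment that $[g]f\le f$, that $[g]f$ is $g$-absolutely continuous, and that an order-bounded increasing sequence in $\rpa$ converges pointwise to its supremum. For part (a), the implication $[g]f=0\Rightarrow f\perp g$ is the quick one: since $g\le ng$ for every $n\ge 1$, monotonicity of the parallel sum (Proposition~\ref{T: parsprop=}(b)) gives $0\le f:g\le f:(ng)\le\sup_n f:(ng)=[g]f=0$, hence $f:g=0$, i.e.\ $f\perp g$. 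For the converse assume $f\perp g$: for each $n$ the functional $\tfrac1n\bigl(f:(ng)\bigr)$ is representable and, by Proposition~\ref{T: parsprop=}(a), satisfies $\tfrac1n\bigl(f:(ng)\bigr)\le\tfrac1n f\le f$ as well as $\tfrac1n\bigl(f:(ng)\bigr)\le\tfrac1n(ng)=g$; singularity forces it to vanish, so $f:(ng)=0$ for all $n$ and $[g]f=0$.

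For part (b), the direction $[g]f=f\Rightarrow f\ll g$ is immediate since $[g]f$ is $g$-absolutely continuous by construction. For the converse assume $f\ll g$ and use Theorem~\ref{T:closabs} to write $f$ as the pointwise limit of an increasing sequence $\seq{f}$ in $\rpa$ with $f_n\le\alpha_n g$ for some $\alpha_n>0$ (without loss of generality $\alpha_n>0$, since $\alpha_n=0$ would force $f_n=0$). Fixing $n$ and $m\ge 1$, the inequality $\tfrac{m}{\alpha_n}f_n\le mg$ together with Proposition~\ref{T: parsprop=}(b)--(c) yields
\begin{equation*}
\frac{m}{\alpha_n+m}\,f_n=f_n:\Bigl(\tfrac{m}{\alpha_n}f_n\Bigr)\le f_n:(mg)\le f:(mg)\le[g]f .
\end{equation*}
Letting $m\to\infty$ the left-hand side increases to $f_n$, so $f_n\le[g]f$; letting $n\to\infty$ then gives $f\le[g]f$, and combined with $[g]f\le f$ we conclude $[g]f=f$.

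The only step that is not mere bookkeeping is this last chain of inequalities. The trick is to replace the unbounded multiple $mg$ by the comparable multiple $\tfrac{m}{\alpha_n}f_n$ of $f_n$ itself---legitimate precisely because $f_n\le\alpha_n g$---so that the parallel sum becomes computable by Proposition~\ref{T: parsprop=}(c) and can then be driven up to $f_n$ as $m\to\infty$. I expect that to be the main obstacle; everything else reduces to the monotonicity and domination properties already recorded, and to the observation that the approximants supplied by Theorem~\ref{T:closabs} are $g$-dominated, not merely $g$-absolutely continuous.
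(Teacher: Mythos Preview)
Your proof is correct and follows essentially the same route as the paper's. The only cosmetic difference is in the converse of (b): the paper first isolates the special case ``$f\le\alpha g\Rightarrow[g]f=f$'' via the inequality $[g]f\ge f:\bigl(\tfrac{n}{\alpha}f\bigr)=\tfrac{n}{\alpha+n}f$ and then applies it to each $f_n$ to get $f_n=[g]f_n\le[g]f$, whereas you inline the same computation directly for $f_n$; the underlying parallel-sum trick is identical.
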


\begin{proof}
To prove $(a)$ observe that $[g]f=0$ implies $f:g=0$ by definition, and hence, $f$ and $g$ are singular. Conversely, assume that $f$ and $g$ are singular, but $[g]f\neq0$. In this case, there exists $\nen$ such that $f:ng\neq0$, which is a contradiction, because $0\neq\big(\frac{1}{n}f\big):g\leq f,g$. 

Now we are going to prove $(b)$. If $[g]f=f$ then $f:(ng)\leq ng$ and Theorem \ref{T:closabs} imply that $f$ is $g$-absolutely continuous. For the converse implication observe first that if $f$ is $g$-dominated, i.e., there exists an $\alpha > 0$ constant such that $f\leq \alpha g$, then $[g]f=f$. Indeed, for every $\nen$ we have
$$f\geq [g]f=\sup\limits_{k\in\dupN}f:(kg)\geq f:\left(\frac{n}{\alpha}f\right)=\frac{n}{\alpha+n}f,$$
which implies $[g]f=f$ by taking supremum in $n$. Now assume that $f$ is $g$-absolutely continuous, and recall that this guarantees that $f$ is a limit of a monotone increasing sequence $(f_n)_{\nen}$ of $g$-dominated representable functionals. According to the previous observation, we have
$$f_n=[g]f_n\leq[g]f\leq f$$
which implies that $[g]f=f$, again by taking supremum.
\end{proof}
\begin{corollary}
If $f$ is both $g$-absolutely continuous and $g$-singular, then $f$ is the identically zero functional.
\end{corollary}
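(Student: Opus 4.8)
The plan is to derive the conclusion immediately from the two equivalences in Proposition \ref{P: reg-sing-characterization}. The hypotheses say precisely that $f\perp g$ and $f\ll g$ hold simultaneously, so I would first invoke part (a) of that proposition, which tells us that $f\perp g$ is equivalent to $[g]f=0$; hence $[g]f$ is the zero functional. Then I would invoke part (b), which tells us that $f\ll g$ is equivalent to $[g]f=f$; combining the two identities gives $f=[g]f=0$.

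Since both directions of Proposition \ref{P: reg-sing-characterization} are already available, there is essentially no obstacle here: the argument is a one-line chain of equalities $f=[g]f=0$. If one wanted a more self-contained version avoiding the explicit reference to $[g]f$, an alternative would be to observe directly that $f\ll g$ forces $f$ to be, via Theorem \ref{T:closabs}, the pointwise limit of an increasing sequence $\seq{f}$ with $f_n\leq\alpha_n g$; each such $f_n$ satisfies $f_n\leq f$ and $f_n\leq\alpha_n g$, so $f_n$ is a representable functional dominated by (a multiple of) $g$ and by $f$, while $f\perp g$ would then be used — after a mild rescaling to compare $f_n$ against $g$ rather than $\alpha_n g$ — to conclude $f_n=0$ for every $n$, whence $f=\limn f_n=0$. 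The only point requiring a little care in this second route is that singularity $f\perp g$ is phrased in terms of a functional majorised by both $f$ and $g$ (not by $f$ and $\alpha_n g$), but this is harmless: $\tfrac{1}{\alpha_n}f_n\leq g$ and $\tfrac1{\alpha_n}f_n\leq f$ (the latter since $\alpha_n\ge 1$ may be assumed, or one simply uses $\min(1,\alpha_n^{-1})f_n$), so $\tfrac1{\alpha_n}f_n=0$.

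I would present the short version in the paper, as it is the cleanest: by Proposition \ref{P: reg-sing-characterization}(a) the hypothesis $f\perp g$ yields $[g]f=0$, and by Proposition \ref{P: reg-sing-characterization}(b) the hypothesis $f\ll g$ yields $[g]f=f$; therefore $f=0$.
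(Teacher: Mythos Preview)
Your short argument is exactly what the paper intends: the corollary is stated without proof, as an immediate consequence of Proposition \ref{P: reg-sing-characterization}, and your one-line chain $f=[g]f=0$ from parts (b) and (a) is precisely that consequence. The alternative route you sketch is also valid but unnecessary here.
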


In the next theorem (which is the main result of this section) we establish a Lebesgue-type decomposition in terms of $[g]f$; cf. also \cite{Gudder}, \cite{Szucs}, \cite{Tarcsay_repr}.

\begin{theorem}\label{T: Ldt-rf}
Let $f,g\in\rpa$ be arbitrary representable functionals on $\alg$. Then 
\begin{equation}\label{E:lebdec}
f=[g]f+(f-[g]f)
\end{equation}
is a $g$-Lebesgue decomposition of $f$, that is, $[g]f\ll g$ and $(f-[g]f)\perp g$. Furthermore, this decomposition is extremal in the following sense:
$$h\in\rpa,~h\leq f~\mbox{and}~h\ll g\quad\Rightarrow\quad h\leq[g]f.$$
\end{theorem}
\begin{proof}
First we prove the maximality of $[g]f$. Let $h$ be a representable functional such that $h\leq f$ and $h\ll g$. According to Proposition \ref{P: reg-sing-characterization}(b) and Proposition \ref{T: parsprop=}(b) we have
$$h=[g]h=\sup\limits_{\nen}h:(ng)\leq \sup\limits_{\nen}f:(ng)=[g]f.$$
Using the maximality property of $[g]f$ and the fact that the sum of $g$-absolutely continuous functionals is $g$-absolutely continuous, one can obtain $$[g](h+k)\geq [g]h+[g]k$$ for every $h,k\in\rpa$. Since $[g]f$ is $g$-absolutely continuous according to Theorem \ref{T:closabs}, it is enough to prove the $g$-singularity of $f-[g]f$. This follows from Proposition \ref{P: reg-sing-characterization} and
\begin{align*}
[g]f=[g]\geq[g]([g]f)+[g](f-[g](f))=[g]f+[g](f-[g]f),
\end{align*}
because $[g](f-[g]f)=0$ holds if and only if $g$ and $f-[g]f$ are singular.
\end{proof}

The equivalence in Theorem \ref{T:closabs} combined with the maximality of $[g]f$ shows that decomposition \eqref{E:lebdec} coincides with the one presented in \cite[Theorem 3.3]{Tarcsay_repr}. Using unbounded operator-techniques, it was also proved in \cite[Theorem 4.2]{Tarcsay_repr} that the corresponding absolutely continuous parts $[g]f$ and $[f]g$ are mutually absolute continuous for all representable functionals $f$ and $g$. Proposition \ref{P: reg-sing-characterization} and Theorem \ref{T:closabs} shed a new light to this result: mutual absolute continuity follows directly from the order properties of $\rpa$.

\begin{corollary}\label{abszfolytabszfolyt}
Let $f$ and $g$ be representable functionals on $\alg$, and consider their Le\-besgue-type decompositions with respect to each other. Then the absolutely continuous parts are mutually absolutely continuous, i.e.,
\begin{align*}
[g]f\ll[f]g\qquad\mbox{and}\qquad[f]g\ll[g]f.
\end{align*}
\end{corollary}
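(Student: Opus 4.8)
The plan is to derive Corollary \ref{abszfolytabszfolyt} purely from the order-theoretic machinery already at hand, namely Proposition \ref{P: reg-sing-characterization}, Theorem \ref{T:closabs}, and the elementary monotonicity properties of $[g](\cdot)$ and of the parallel sum. By symmetry it suffices to prove $[g]f\ll[f]g$; the reverse relation follows by interchanging the roles of $f$ and $g$.

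First I would reduce the claim to a statement about iterated operations. Since $[g]f\leq f$ and $[f]g\leq g$ and both operations are monotone in each argument, one has $[\,[f]g\,]([g]f)\leq[g]f$ trivially, so the content is the reverse inequality, i.e. that $[g]f$ is itself $[f]g$-absolutely continuous. By Proposition \ref{P: reg-sing-characterization}(b) this is exactly the equality $[\,[f]g\,]([g]f)=[g]f$. The key observation I would exploit is that absolute continuity only "sees" the absolutely continuous part: more precisely, $[g]h$ depends on $g$ only through its $h$-behaviour, and I expect the identity
\begin{equation*}
[\,[f]g\,]([g]f)=[g]f
\end{equation*}
to follow from the chain
\begin{equation*}
[g]f=[\,g\,]\bigl([g]f\bigr)=\sup_{n}\,\bigl([g]f\bigr):(ng)
\end{equation*}
together with the fact that each parallel summand $\bigl([g]f\bigr):(ng)$ is dominated by $[g]f\leq f$ and is $g$-dominated, hence — being $\leq ng$ and representable — is $[f]g$-absolutely continuous, because $g$-dominatedness should pass to $[f]g$-dominatedness once one knows $[g]f\ll g\ll[f]g$ in the appropriate sense. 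The cleanest route is probably: $[g]f\ll g$ by Theorem \ref{T: Ldt-rf}, so write $[g]f=\sup_n f_n$ with $f_n\leq\alpha_n g$ (Theorem \ref{T:closabs}); then since $f_n\leq\alpha_n g$ and the regular part absorbs dominated pieces, $f_n=[g]f_n$, and I want to upgrade this to $f_n\ll[f]g$, after which $[g]f=\sup_n f_n$ is a supremum of $[f]g$-absolutely continuous functionals and hence $[f]g$-absolutely continuous by the sum/supremum stability of absolute continuity noted in the proof of Theorem \ref{T: Ldt-rf}.

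The main obstacle is precisely the step $f_n\leq\alpha_n g\ \Rightarrow\ f_n\ll[f]g$: domination by $g$ does not a priori give domination by, or absolute continuity with respect to, the $f$-regular part $[f]g$ of $g$, since $[f]g$ may be a proper summand of $g$. To handle it I would use the Lebesgue decomposition of $g$ with respect to $f$, writing $g=[f]g+(g-[f]g)$ with $(g-[f]g)\perp f$, and then show that any representable $h$ with $h\leq f$ and $h\ll g$ satisfies $h\ll[f]g$. This last implication is the heart of the matter and I expect it to follow from Theorem \ref{T:closabs} applied twice: $h\ll g$ gives $h=\sup_n h_n$ with $h_n\leq\beta_n g$, and combining $h_n\leq h\leq f$ with $h_n\leq\beta_n g$ via the parallel sum yields $h_n=h_n:(c h_n)\leq$ something controlled by $f:(\,\cdot\, g)$, which is $\leq[g]$-type and, being $\leq f$, lands inside the $f$-absolutely-continuous regime of $g$, i.e. under $[f]g$. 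Concretely $h_n\wedge$-type estimates should give $h_n\leq \gamma_n\,[f]g$ after noting $h_n\leq f$ forces the singular part $g-[f]g$ to contribute nothing. Once $h_n\leq\gamma_n[f]g$ for each $n$, taking the supremum and invoking Theorem \ref{T:closabs}(ii) gives $h\ll[f]g$; applying this with $h=[g]f$ (which is $\leq f$ and $\ll g$ by Theorem \ref{T: Ldt-rf}) completes the proof.

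Finally I would spell out the bookkeeping: with $[g]f\ll[f]g$ established and the symmetric argument giving $[f]g\ll[g]f$, the corollary is proved. I would also remark, as the paragraph preceding the statement already anticipates, that no unbounded-operator techniques are needed — everything is extracted from the maximality property in Theorem \ref{T: Ldt-rf} and the characterizations of singularity and absolute continuity via the parallel sum.
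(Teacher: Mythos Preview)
Your overall strategy matches the paper's: isolate the auxiliary lemma ``if $h\leq f$ and $h\ll g$ then $h\ll[f]g$'' and apply it with $h=[g]f$. You also correctly reduce this lemma, via Theorem \ref{T:closabs}, to showing that $h_n\leq\gamma_n\,[f]g$ whenever $h_n\leq h\leq f$ and $h_n\leq\beta_n g$. The gap is precisely at this step: the mechanism you propose --- parallel sums, ``$\wedge$-type estimates'', and the heuristic that ``$h_n\leq f$ forces the singular part $g-[f]g$ to contribute nothing'' --- is not an argument. Domination by $g=[f]g+(g-[f]g)$ combined with $h_n\leq f$ does not by itself yield $h_n\leq\beta_n[f]g$; the cone $\rpa$ has no Riesz decomposition property one could invoke here.

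The one-line fix (which is exactly what the paper does) uses the monotonicity of $[f](\cdot)$ that you already invoked earlier in a different spot: from $h_n\leq f$ one has $h_n=[f]h_n$ by Proposition \ref{P: reg-sing-characterization}(b), and from $h_n\leq\beta_n g$ one gets $[f]h_n\leq[f](\beta_n g)=\beta_n[f]g$; hence $h_n\leq\beta_n[f]g$ and Theorem \ref{T:closabs}(ii) gives $h\ll[f]g$. You actually used the identical trick when you wrote ``$f_n=[g]f_n$''; applying it with $[f]$ in place of $[g]$ closes the gap immediately. (Also, the aside ``$[g]f\ll g\ll[f]g$ in the appropriate sense'' is wrong as stated --- $g\ll[f]g$ fails whenever $g-[f]g\neq0$ --- and should simply be dropped.)
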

\begin{proof}
Observe first that if $g_1$, $g_2$, and $h$ are representable functionals such that $g_1\leq g_2$ and $g_1\ll h$, then 
\begin{align*}g_1\ll[g_2]h.\end{align*}
Indeed, if $g_1$ is $h$-absolutely continuous, then there exists a monotonically nondecreasing sequence of representable functionals $(g_{1,n})_{n\in\mathbb{N}}$ such that $\sup\limits_{n\in\mathbb{N}}g_{1,n}=g_1$ and $g_{1,n}$ is dominated by $h$ for all $n\in\mathbb{N}$ (i.e., $g_{1,n}\leq g_1\leq g_2$, and $g_{1,n}\leq c_n h$ for some $c_n\geq 0$). Consequently,
\begin{align*}
g_{1,n}=[g_2]g_{1,n}\leq[g_2]c_nh=c_n[g_2]h
\end{align*}
which means that $g_1\ll[g_2]h$. Now, apply the previous observation with $$g_1:=[g]f\qquad g_2:=f,\qquad h:=g.$$
Similarly, one can prove that $[f]g\ll[g]f$.
\end{proof}

Combining the previous result with Proposition \ref{P: reg-sing-characterization}(b) we gain the following identity
\begin{align}\label{F: [g]f=[[f]g]f}
[g]f=\big[[f]g\big]f.
\end{align}
Indeed, on the one hand, $g\geq[f]g$ implies  $[g]f\geq\big[[f]g\big]f$. On the other hand, the inequalities $[g]f\ll[f]g$ and $[g]f\leq f$ yield $$[g]f=\big[[f]g\big]([g]f)\leq\big[[f]g\big]f\leq[g]f.$$
%%%%%%%%%%%%%%%%%%%%%%%%%%%%%%%%%%%%%%%%%%%%%%%%%%%%%%%%%%%%%%%%%%%%%%%%%%%%%%%%%
\section{Extreme points of functional intervals}\label{S: extremal section}
In this chapter we are going to describe the extreme points of intervals of the form 
$$[f,g]:=\set[\big]{h}{h\in\rpa,~f\leq h\leq g},$$
where $f,g\in\rpa$, $f\leq g$. To do so we need first the following lemma characterizing singularity. 

\begin{lemma}\label{L: reg+sing}
Let $f,g,h\in\rpa$ be representable functionals and assume that $f$ is $g$-dominated. Then the following statements are equivalent
\begin{enumerate}[\upshape (i)]
\item $g\perp h$,
\item $[g](f+h)=[g]f=f$.
\end{enumerate}
\end{lemma}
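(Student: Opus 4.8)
The plan is to prove the two implications separately, exploiting the characterization of singularity via parallel sums and the behaviour of $[g]\,\cdot\,$ on sums established in Section~\ref{S: Lebesgue decomposition}.

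For the direction (i)$\Rightarrow$(ii): since $f$ is $g$-dominated, Proposition~\ref{P: reg-sing-characterization}(b) (together with the observation in its proof that $g$-dominated functionals satisfy $[g]f=f$) gives $[g]f=f$ immediately; the content is to show $[g](f+h)=f$. From the superadditivity $[g](f+h)\geq[g]f+[g]h$ proved inside Theorem~\ref{T: Ldt-rf}, and from $g\perp h$ combined with Proposition~\ref{P: reg-sing-characterization}(a) giving $[g]h=0$, we get $[g](f+h)\geq f$. For the reverse inequality I would use monotonicity of $[g]\,\cdot\,$ together with the extremal property in Theorem~\ref{T: Ldt-rf}: $[g](f+h)$ is a representable functional which is $g$-absolutely continuous, so it suffices to bound it by $f$. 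The natural route is to note $[g](f+h)\leq f+h$ and $[g](f+h)\ll g$, then invoke the singularity $g\perp h$ to ``absorb'' the $h$-part — concretely, if $k:=[g](f+h)$ then $k\leq f+h$ with $f\leq cg$, and $k\ll g$; I would show the part of $k$ exceeding $f$ must be both $\ll g$ (being dominated by $k$, after subtracting the representable $f$, using heredity) and $\leq h$ hence $\ll g$ and $\perp g$, so zero. This absorption step is the one requiring care about representability of differences, but heredity (the fact recalled in Section~\ref{S: preliminaries} that $h\leq g$, $g\in\rpa$ forces $h,g-h\in\rpa$) should make it go through once one checks $k-f$ is Hilbert bounded.

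For (ii)$\Rightarrow$(i): assume $[g](f+h)=f$. Since $[g]h\leq[g](f+h)$ is false in general, I instead use superadditivity again: $f=[g](f+h)\geq[g]f+[g]h=f+[g]h$, forcing $[g]h=0$, which by Proposition~\ref{P: reg-sing-characterization}(a) is exactly $h\perp g$, i.e. $g\perp h$. (The hypothesis $[g]f=f$ is used here to cancel $[g]f$ against $f$.)

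The main obstacle I anticipate is the reverse inequality $[g](f+h)\leq f$ in (i)$\Rightarrow$(ii): unlike superadditivity, subadditivity of $[g]\,\cdot\,$ on general sums is false, so one cannot simply write $[g](f+h)\leq[g]f+[g]h$. The fix is to work with the concrete representing data or, more cleanly, to use the extremal characterization in Theorem~\ref{T: Ldt-rf} applied to $f+h$ in place of $f$: the $(f+h)$-Lebesgue decomposition with respect to $g$ is $[g](f+h)+\big((f+h)-[g](f+h)\big)$, and one shows the singular part is at least $h$ (since $h\perp g$ and the decomposition is extremal among $g$-singular pieces dominated by $f+h$), hence the absolutely continuous part $[g](f+h)$ is at most $(f+h)-h=f$. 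Making ``the singular part dominates $h$'' precise — i.e. that $h\leq (f+h)-[g](f+h)$ — again rests on heredity and the extremality clause of Theorem~\ref{T: Ldt-rf}, applied now on the side of singular parts via a symmetric argument.
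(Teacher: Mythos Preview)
Your direction (ii)$\Rightarrow$(i) is correct and matches the paper: superadditivity gives $f=[g](f+h)\geq[g]f+[g]h=f+[g]h$, hence $[g]h=0$, hence $g\perp h$. Likewise, in (i)$\Rightarrow$(ii) the identity $[g]f=f$ and the lower bound $[g](f+h)\geq f$ are obtained exactly as you describe.

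The gap is in your reverse inequality $[g](f+h)\leq f$. Writing $k:=[g](f+h)-f$ (which is indeed representable by the heredity recalled in Section~\ref{S: preliminaries}, and satisfies $0\leq k\leq h$), you argue that $k\ll g$ because $k\leq [g](f+h)$ and $[g](f+h)\ll g$. But absolute continuity is \emph{not} hereditary in general---this is precisely the phenomenon behind non-uniqueness of the Lebesgue decomposition (cf.\ the remark after Theorem~\ref{T:sufficient_leb} and the examples in Section~\ref{S: uniqueness}). So from $k\leq[g](f+h)\ll g$ you cannot conclude $k\ll g$. Your alternative route, showing that the singular part $(f+h)-[g](f+h)$ must dominate $h$, runs into the same wall: Theorem~\ref{T: Ldt-rf} states a maximality property only for the absolutely continuous part, and there is no dual ``maximal singular part'' statement to invoke.

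The paper avoids heredity altogether by a contradiction argument that exploits the full strength of $g$-\emph{dominatedness} (not mere $g$-absolute continuity) of $f$. One has $k\leq h$, so $k\perp g$; assuming $k\neq 0$, $k$ is not $g$-absolutely continuous, so there exists $(a_n)$ with $g(a_n^*a_n)\to 0$, $k((a_n-a_m)^*(a_n-a_m))\to 0$, but $k(a_n^*a_n)\nrightarrow 0$. Because $f\leq cg$, the first condition forces $f(a_n^*a_n)\to 0$ and hence $f((a_n-a_m)^*(a_n-a_m))\to 0$ as well. Adding, the same sequence witnesses that $f+k=[g](f+h)$ is not $g$-absolutely continuous, a contradiction. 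This is where the hypothesis ``$f$ is $g$-dominated'' (rather than merely $f\ll g$) is actually used.
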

\begin{proof}
Since $g\perp h$ is equivalent with $[g]h=0$, the implication (ii)$\Rightarrow$(i) follows from
$$[g]f=[g](f+h)\geq[g]f+[g]h\geq[g]f.$$
To prove the converse implication we observe first that $$f=[g]f\leq[g](f+h)\leq f+h,$$ hence $[g](f+h)=f+k$ for some $k\in\rpa$, $k\leq h$. Assume indirectly that $k\neq0$. Since $g\perp h$ and $k\leq h$, we have $g\perp k$. Consequently, $k$ may not be $g$-absolutely continuous, because the only representable functional which is simultaneously $g$-singular and $g$-absolutely continuous is $k=0$. Thus there exists a sequence $(a_n)_{\nen}$ in $\alg$ such that $k((a_n-a_m)^*(a_n-a_m))\to0$ and $g(a_n^*a_n)\to0$, but $k(a_n^*a_n)\nrightarrow0$. Since $f$ is $g$-dominated, $f(a_n^*a_n)\to0$ holds for this sequence, and thus $f((a_n-a_m)^*(a_n-a_m))\to0$. But then we get  
$$(f+k)((a_n-a_m)^*(a_n-a_m))\to0,\qquad g(a_n^*a_n)\to0\qquad(f+k)(a_n^*a_n)\nrightarrow0,$$
which is impossible because $f+k$ is $g$-absolutely continuous.
\end{proof}

Assume that $f$ and $g$ are representable functionals such that $g\leq f$. Following the terminology of F. Riesz \cite{Riesz} we say that $g$ is a \emph{disjoint part} of $f$ if $g$ and $f-g$ are mutually singular. The following theorem states that the extreme points of $[0,f]$ are precisely the disjoint parts of $f$. For analogous results see \cite{anderson-trapp,EL,green-morley,Pekarev,sakai}.
\begin{theorem}\label{T: exchar}
Let $f$ and $g$ be representable functionals, and assume that $g\leq f$. Then the following statements are equivalent.
\end{theorem}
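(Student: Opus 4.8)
The plan is to prove the theorem by cycling through two reformulations of the disjointness of $g$ and $f-g$: their mutual singularity $g\perp(f-g)$, and the identity $[g]f=g$. The workhorse throughout will be Lemma~\ref{L: reg+sing}, while the easy direction uses only the hereditary property of representability.

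First I would dispose of the implication that extremality of $g$ in $[0,f]$ forces $g\perp(f-g)$, by contraposition. If $g$ and $f-g$ are not singular, choose a representable $k\neq 0$ with $k\leq g$ and $k\leq f-g$. Then $g+k$ is representable (a sum of representables) and $g+k\leq g+(f-g)=f$, while $g-k$ is representable by the hereditary property (it lies between $0$ and $g$) and satisfies $g-k\leq g\leq f$; hence $g\pm k\in[0,f]$ and $g=\tfrac12\big((g+k)+(g-k)\big)$ is a nontrivial convex decomposition. Conversely, $g\perp(f-g)$ is the same as $[g]f=g$: since $g$ is trivially $g$-dominated, Lemma~\ref{L: reg+sing} applied to the triple $(g,g,f-g)$ reads precisely ``$g\perp(f-g)\iff[g]\big(g+(f-g)\big)=[g]g=g$'', i.e.\ ``$\iff[g]f=g$'' (one may equally read this off the maximality statement in Theorem~\ref{T: Ldt-rf}).

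The substantive step is the converse, that $[g]f=g$ implies $g$ is extreme. Suppose $g=\tfrac12(g_1+g_2)$ with $g_1,g_2\in[0,f]$. From $g_1+g_2=2g$ I get $g_i\leq 2g$, so each $g_i$ is $g$-dominated; moreover $f-g_i\leq 2(f-g)$ because $2(f-g)-(f-g_i)=f-g_{3-i}\geq 0$. Since singularity is stable under positive scaling and passes to smaller representable functionals, $f-g_i\perp g$ for $i=1,2$. Now Lemma~\ref{L: reg+sing} applied to the triple $(g_i,g,f-g_i)$ — whose hypothesis ``$g_i$ is $g$-dominated'' holds and whose condition $g\perp(f-g_i)$ holds — yields $[g]f=[g]\big(g_i+(f-g_i)\big)=[g]g_i=g_i$ for both $i$; hence $g_1=[g]f=g_2$, and then $g_1=g_2=g$ by $g_1+g_2=2g$, so $g$ is extreme.

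I expect this third step to be the only genuine obstacle. The tempting elementary route — set $d:=\tfrac12(g_1-g_2)$ and try to trap $d$ between $0$ and a common minorant of $g$ and $f-g$ — stalls because $d$ need not be positive and $\rpa$ is not a lattice, so $d$ has no Jordan-type decomposition; and one cannot simultaneously transport absolute continuity down to the $g_i$ and singularity up to the $f-g_i$ by naive monotonicity, since absolute continuity is not hereditary with respect to $\leq$ (in line with the non-uniqueness phenomena that appear later in the paper). The decisive point is that Lemma~\ref{L: reg+sing} makes each summand rigid: $g_1$ and $g_2$ are both forced to equal the single functional $[g]f$, so $d$ never has to be handled directly. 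Finally, should the statement also list the parallel-sum form $g:(f-g)=0$, it is interchangeable with $g\perp(f-g)$ by the characterization of singularity recalled after Proposition~\ref{T: parsprop=}, and extremality of $g$ in $[0,f]$ is equivalent to that of $f-g$ via the affine involution $h\mapsto f-h$ of the interval.
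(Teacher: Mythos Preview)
Your proof is correct. The implications (i)$\Rightarrow$(ii) and (ii)$\Leftrightarrow$(iii) match the paper's argument (the paper takes the explicit witness $k=g:(f-g)$ for the first, but that is cosmetic). The step (iii)$\Rightarrow$(i) is where you diverge: the paper argues directly from the parallel-sum definition, using $f:(ng)\geq h:(n\lambda h)=\tfrac{n\lambda}{n\lambda+1}h$ (Proposition~\ref{T: parsprop=}) to squeeze $h\leq[g]f=g$ and likewise $k\leq g$, whence $h=k=g$. Your route instead bootstraps from the already-proved equivalence (ii)$\Leftrightarrow$(iii): having $g\perp(f-g)$, you push singularity down to $f-g_i\leq 2(f-g)$ and then invoke Lemma~\ref{L: reg+sing} with the triple $(g_i,g,f-g_i)$ to force $g_i=[g]f$. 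This is a nice reuse of the lemma and avoids any explicit parallel-sum computation; the paper's version, on the other hand, is logically cleaner in that it proves (iii)$\Rightarrow$(i) without first passing through (ii), and it handles a general convex weight $\lambda\in(0,1)$ rather than only $\lambda=\tfrac12$ (your argument extends verbatim via $g_i\leq\tfrac1{\lambda}g$ and $f-g_i\leq\tfrac1{\lambda}(f-g)$, or by the standard reduction of extremality to midpoints in a convex set, so this is not a genuine gap).
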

\begin{enumerate}[\upshape (i)]
\item $g$ is an extreme point of $[0,f]$,
\item $g$ is a disjoint part of $f$,
\item $[g]f=g$.
\end{enumerate}
\begin{proof}
To prove that (i) implies (ii) assume that $g$ is not singular with respect to $f-g$. In this case,  $g:(f-g)\neq0$, hence 
$$\frac{1}{2}\Big(g-\big(g:(f-g)\big)\Big)+\frac{1}{2}\Big(g+\big(g:(f-g)\big)\Big)=g$$
is a nontrivial convex combination in $[0,f]$. Implication (ii)$\Rightarrow$(iii) follows directly from Proposition \ref{P: reg-sing-characterization}(b) and Lemma \ref{L: reg+sing}:
$$[g]f=[g](g+(f-g))=[g]g=g.$$
Finally, assume  $[g]f=g$ and, indirectly, that  $g$ is not an extreme point of $[0,f]$. In this case, there exists $h,k\in[0,f]$ ($h\neq k$) and $0<\lambda<1$ such that $g=\lambda h+(1-\lambda)k$. According to Proposition \ref{T: parsprop=}, we have
$$f:(ng)\geq h:(n\lambda h)=\frac{n\lambda}{n\lambda+1}h,$$
which yields $g=[g]f\geq h$. Similarly,
$$f:(ng)\geq k:(n(1-\lambda)k) =\frac{n(1-\lambda)}{n(1-\lambda)+1}k, $$
which yields $g=[g]f\geq k.$
These imply $g=h=k$, otherwise there exists an $a\in\alg$ such that $g(\acsa)>h(\acsa)$ or $g(\acsa)>k(\acsa)$. But  then,
$$g(\acsa)=\lambda g(\acsa)+(1-\lambda)g(\acsa)>\lambda h(\acsa)+(1-\lambda)k(\acsa)=g(\acsa),$$
which is contradiction.
\end{proof}
As an elementary consequence, we can characterize the extreme points of $[f,g]$ in terms of the parallel sum as follows:
\begin{corollary}
A representable functional $h\in[f,g]$ is an extreme point of $[f,g]$ if and only if
$$(h-f):(g-h)=0. $$  
\end{corollary}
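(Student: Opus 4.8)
The plan is to reduce the statement to Theorem~\ref{T: exchar} by translating the interval $[f,g]$ to one of the form $[0,\,\cdot\,]$. First I would record that the map $\Phi\colon k\mapsto k-f$ is an affine bijection of $[f,g]$ onto $[0,g-f]$. Here one has to check that the target really consists of representable functionals: for $k\in[f,g]$ we have $f\le k$ with $f$ representable (hence Hilbert bounded) and $k\in\rpa$, so by the heredity of representability recalled in Section~\ref{S: preliminaries} the difference $k-f$ lies in $\rpa$; conversely $f\le f+\ell\le g$ whenever $\ell\in[0,g-f]$. Since $\Phi$ is an affine bijection between convex sets, $h$ is an extreme point of $[f,g]$ if and only if $\Phi(h)=h-f$ is an extreme point of $[0,g-f]$.

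Next I would apply Theorem~\ref{T: exchar} with the roles of its $f$ and $g$ played by $g-f$ and $h-f$, respectively; note that $h-f\le g-f$ because $h\le g$, and that both are representable by the previous paragraph. The theorem then gives that $h-f$ is an extreme point of $[0,g-f]$ precisely when $h-f$ is a disjoint part of $g-f$, i.e. when $h-f$ and $(g-f)-(h-f)=g-h$ are mutually singular. Finally, by the characterization of singularity in terms of the parallel sum (two representable functionals are singular exactly when their parallel sum vanishes; cf.\ the discussion following Proposition~\ref{T: parsprop=}), the functionals $h-f$ and $g-h$ are mutually singular if and only if $(h-f):(g-h)=0$. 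Chaining the three equivalences yields the assertion.

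The only steps requiring any care are the representability of the differences $k-f$ (so that the earlier results are applicable) and the elementary fact that an affine bijection of convex sets carries extreme points to extreme points; neither constitutes a genuine obstacle, and everything else is immediate from the quoted results.
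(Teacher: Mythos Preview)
Your proof is correct and is precisely the argument the paper intends: the corollary is stated as ``an elementary consequence'' of Theorem~\ref{T: exchar} with no separate proof, and the only way to read that is exactly the translation $k\mapsto k-f$ onto $[0,g-f]$ followed by the equivalence (i)$\Leftrightarrow$(ii) of Theorem~\ref{T: exchar} and the parallel-sum characterization of singularity. Your care about representability of the differences via heredity is warranted and correctly handled.
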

\begin{corollary}
For $f,g\in\rpa$ the following assertions are equivalent:
\begin{enumerate}[\upshape (i)]
 \item $g$ is an extreme point of $[0,g+f]$,
 \item Each extreme point of $[g,g+f]$ is an extreme point of $[0,g+f]$.
\end{enumerate}
\end{corollary}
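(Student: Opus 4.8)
The plan is to reduce everything to the parallel-sum criterion for extreme points established in the preceding corollary: for representable $p\leq q$, a functional $h\in[p,q]$ is an extreme point of $[p,q]$ precisely when $(h-p):(q-h)=0$. Applied to the interval $[0,g+f]$ this says that $h$ is extremal there iff $h:((g+f)-h)=0$, and applied to $[g,g+f]$ it says that $h$ is extremal there iff $(h-g):((g+f)-h)=0$. Parametrising the points of $[g,g+f]$ as $h=g+k$ with $k\in[0,f]$ (the affine map $h\mapsto h-g$ is a bijection of $[g,g+f]$ onto $[0,f]$, and $k=h-g$ is again representable by the heredity of representability), these two conditions read $(g+k):(f-k)=0$ and $k:(f-k)=0$ respectively, while (i) is simply $g:f=0$. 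Thus the corollary is equivalent to the statement: $g:f=0$ if and only if, for every $k\in[0,f]$, $k:(f-k)=0$ implies $(g+k):(f-k)=0$.

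The direction (ii)$\Rightarrow$(i) is then immediate: $g$ belongs to $[g,g+f]$ and is an extreme point of it, since $(g-g):((g+f)-g)=0:f=0$; hence by (ii) $g$ is an extreme point of $[0,g+f]$, i.e. $g:f=0$.

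The content of the corollary is the direction (i)$\Rightarrow$(ii): assuming $g:f=0$ and $k:(f-k)=0$ with $0\leq k\leq f$, one must prove $(g+k):(f-k)=0$. For this I would argue directly from the infimum formula \eqref{E:f:g_q}. Fix $a\in\alg$ and $\varepsilon>0$. Using that $\big(k:(f-k)\big)(a^*a)=0$, pick $b_1\in\alg$ with $k((a-b_1)^*(a-b_1))<\varepsilon$ and $(f-k)(b_1^*b_1)<\varepsilon$; then, using that $\big(g:f\big)$ evaluated at the element $a-b_1$ is $0$, pick $c\in\alg$ with $g((a-b_1-c)^*(a-b_1-c))<\varepsilon$ and $f(c^*c)<\varepsilon$. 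Now set $b:=b_1+c$, so that $a-b=(a-b_1)-c$. The key estimate is that, by the triangle inequality for each of the seminorms $x\mapsto\phi(x^*x)^{1/2}$ (valid because, $\phi$ being positive, $(x,y)\mapsto\phi(y^*x)$ is a positive semidefinite Hermitian form) together with the dominations $k\leq f$ and $f-k\leq f$, each of $g((a-b)^*(a-b))$, $k((a-b)^*(a-b))$ and $(f-k)(b^*b)$ is bounded by a constant multiple of $\varepsilon$; adding these, $(g+k)((a-b)^*(a-b))+(f-k)(b^*b)\leq 9\varepsilon$, whence by \eqref{E:f:g_q} $\big((g+k):(f-k)\big)(a^*a)\leq 9\varepsilon$. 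Letting $\varepsilon\to0$ shows that $(g+k):(f-k)$ vanishes on every $a^*a$; being a parallel sum of representable functionals, hence itself representable, it then vanishes identically by \eqref{E:cyclic}.

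The only genuine obstacle is this last implication, and it needs a real argument because a sum of two functionals each singular to $f-k$ need not itself be singular to $f-k$ — so it is not enough to observe the (true) facts $g\perp(f-k)$ and $k\perp(f-k)$. What makes it work is that the hypothesis $g\perp f$ is strictly stronger than $g\perp k$ and $g\perp(f-k)$ taken separately, and the nesting of the two approximation steps (choosing $c$ at the shifted point $a-b_1$, only after $b_1$) is precisely what exploits this extra strength. Conceptually, when $g\perp f$ the functional $g+f$ behaves like an orthogonal direct sum $g\oplus f$, whose disjoint parts are exactly the sums of a disjoint part of $g$ with a disjoint part of $f$; the estimate above is a hands-on verification of this principle in the single case that is needed.
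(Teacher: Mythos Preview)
Your proof is correct and takes a genuinely different route from the paper's.

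The paper argues (i)$\Rightarrow$(ii) via the Lebesgue-decomposition machinery: writing an extreme point of $[g,g+f]$ as $g+h$ with $h$ extreme in $[0,f]$, it uses the identity \eqref{F: [g]f=[[f]g]f} together with Lemma~\ref{L: reg+sing} to compute
\[
[g+h](f-h)=\big[[f-h](g+h)\big](f-h)\leq\big[[f](g+h)\big](f-h)=[h](f-h)=0,
\]
where the penultimate equality is Lemma~\ref{L: reg+sing} (applied with $h$ playing the role of the dominated part and $g$ the singular part, using $g\perp f$), and the last is $h\perp(f-h)$. You instead work directly from the variational formula \eqref{E:f:g_q} with a nested two-step $\varepsilon$-approximation: first exploit $k:(f-k)=0$ at $a$, then exploit $g:f=0$ at the shifted point $a-b_1$, and control the cross terms via $k\leq f$ and $f-k\leq f$. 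Your estimates are accurate and the bound $9\varepsilon$ is sharp for this decomposition.

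What each approach buys: the paper's argument showcases the $[\cdot]$-calculus it has just developed and ties the corollary back into the main thread; yours is more self-contained, needing only \eqref{E:f:g_q} and the Cauchy--Schwarz seminorm, and in particular bypasses both Lemma~\ref{L: reg+sing} and the identity \eqref{F: [g]f=[[f]g]f}. Your closing remark is well taken: since singularity is not in general additive, one cannot simply deduce $(g+k)\perp(f-k)$ from $g\perp(f-k)$ and $k\perp(f-k)$; the nesting of the two approximations is exactly where the stronger hypothesis $g\perp f$ (rather than merely $g\perp(f-k)$) enters.
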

\begin{proof} To prove (i)$\Rightarrow$(ii) assume that $g$ is an extreme point of $[0,f+g]$. According to Theorem \ref{T: exchar}(ii) this means that $g$ and $f$ are singular. Let $k$ be an arbitrary extreme point of $[g,g+f]$, and observe that $k$ can be written as $k=g+h$, where $h$ is an extreme point of $[0,f]$. Since $h\leq f$ and $f\perp g$, Lemma \ref{L: reg+sing}(ii) and \eqref{F: [g]f=[[f]g]f} ensure that
\begin{align*}
[g+h](f-h)&=\big[[f-h](g+h)\big](f-h)\\ 
          &\leq\big[[f](g+h)\big](f-h)=[h](f-h)=0.
\end{align*}
Again, using Theorem \ref{T: exchar}(ii) we conclude that $k$ is an extreme point of $[0,f+g]$ as well. Implication (ii)$\Rightarrow$(i) is trivial.
\end{proof}
\section{On the infimum of representable functionals}\label{ S: infimum}

The significance of non-commutative Lebesgue decomposition theory is revealed by considering the infimum of positive operators in a Hilbert space. Namely, Ando in his famous paper \cite{Ando-inf} has shown that the infimum $A\wedge B$ of the positive operators $A$ and $B$ exists (in the positive cone) precisely when the corresponding absolutely continuous parts $[A]B$ and $[B]A$ are comparable. Because of the strong analogy, a natural question arises: can we handle the problem for representable functionals in the same way? Unfortunately, it turns out that the situation here is much more complicated. Since ${}^{\ast}$-algebras may have very different algebraic structures, there is no hope to give a similar satisfactory characterization in such a general setting. Nevertheless a nontrivial sufficient condition can be furnished in the same manner. As we shall see, this condition may be necessary, as well as redundant in some important particular cases (see examples below).

First of all, for the sake of completeness, let us recall the notion of infimum in the partially ordered set $\rpa$. We say that the infimum of $f,g\in\rpa$ exists in $\rpa$ if there is an element $h$ in $\rpa$ such that $h\leq f$, $h\leq g$, and that $h'\leq f$ and $h'\leq g$ imply $h'\leq h$ for any $h'\in\rpa$. The infimum of $f$ and $g$ (in case when it exists) is denoted by $f\wedge g$. 

Translating Ando's result we gain the following sufficient condition for the existence of $f\wedge g$.

\begin{theorem}\label{T: inf_sufficient}
Let $f,g$ be representable functionals over $\alg$. If $[f]g$ and $[g]f$ are comparable, that is $[f]g\leq[g]f$ or $[g]f\leq[f]g$, then the infimum $f\wedge g$ exists in $\rpa$. In this case, $f\wedge g=\min\{[f]g,[g]f\}.$
\end{theorem}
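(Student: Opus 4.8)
The plan is to reduce the existence of $f\wedge g$ to a statement about the two absolutely continuous parts $[f]g$ and $[g]f$, exploiting the fact (Theorem~\ref{T: Ldt-rf}) that $[g]f$ captures \emph{all} of $f$ that is $g$-absolutely continuous, together with mutual absolute continuity (Corollary~\ref{abszfolytabszfolyt}). Assume without loss of generality that $[f]g\leq[g]f$, and set $h:=[f]g=\min\{[f]g,[g]f\}$. The first step is to check that $h$ is a genuine lower bound: $h=[f]g\leq g$ is immediate from $[f]g\leq g$, and $h=[f]g\leq[g]f\leq f$ uses the hypothesis together with $[g]f\leq f$. So $h\in[0,f]\cap[0,g]$, hence $h\leq f$ and $h\leq g$.

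The heart of the argument is to show $h$ is the \emph{greatest} such lower bound. Let $h'\in\rpa$ with $h'\leq f$ and $h'\leq g$; I want $h'\leq h=[f]g$. Since $h'\leq g$, the functional $h'$ is $g$-dominated, hence $g$-absolutely continuous; likewise $h'$ is $f$-absolutely continuous. Now apply the maximality clause of Theorem~\ref{T: Ldt-rf}: from $h'\leq f$ and $h'\ll g$ one might hope to land inside $[g]f$, but that gives $h'\leq[g]f$, which is the wrong part. Instead I should use $h'\leq g$ and $h'\ll f$, which by the maximality of $[f]g$ (Theorem~\ref{T: Ldt-rf} applied with the roles of $f$ and $g$ exchanged) yields $h'\leq[f]g=h$. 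That is exactly what is needed. So the only place the hypothesis $[f]g\leq[g]f$ enters is in verifying $h\leq f$; the maximality half works regardless of comparability, which is the structural reason the infimum equals $\min\{[f]g,[g]f\}$ once comparability is known.

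To spell out the maximality step cleanly: a functional $h'$ with $h'\leq f$ is Hilbert bounded (being dominated by the representable $f$), hence representable by the hereditary property recalled in Section~\ref{S: preliminaries}; and $h'\ll f$ because $h'$ is $f$-dominated. Then Theorem~\ref{T: Ldt-rf}, read with $f\leftrightarrow g$, says precisely that any representable $h'\leq g$ with $h'\ll f$ satisfies $h'\leq[f]g$. Symmetrically, had we instead assumed $[g]f\leq[f]g$, we would take $h:=[g]f$ and use $h'\leq f$, $h'\ll g$ to get $h'\leq[g]f$. In either case $h=\min\{[f]g,[g]f\}$ is a lower bound dominating every other lower bound, so $f\wedge g=\min\{[f]g,[g]f\}$ exists in $\rpa$.

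I do not expect a serious obstacle here; the lemma is essentially a formal consequence of the extremal property of the Lebesgue decomposition. The one point requiring a little care is making sure every lower bound $h'$ is actually \emph{representable} (not merely positive) before invoking Theorem~\ref{T: Ldt-rf} — but this is handed to us by the heredity of representability under domination by a representable functional. The other subtlety worth stating explicitly is \emph{which} absolute continuity to use: one must feed $h'\leq g$ and $h'\ll f$ into the maximality of $[f]g$ (and not the other pairing), since that is what forces $h'$ below the smaller of the two parts. With these observations the proof is short.
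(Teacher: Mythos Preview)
Your argument is correct and matches the paper's proof: verify that $\min\{[f]g,[g]f\}$ is a lower bound via the comparability hypothesis and $[g]f\leq f$, then use the maximality clause of Theorem~\ref{T: Ldt-rf} (applied with $h'\leq g$ and $h'\ll f$) to show it dominates every common lower bound. Your aside about ensuring $h'$ is representable is unnecessary, since $h'\in\rpa$ by the very definition of infimum in $\rpa$, and Corollary~\ref{abszfolytabszfolyt} is mentioned but not actually used.
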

\begin{proof}
Assume that $[f]g\leq [g]f$. In this case $[f]g\leq[g]f\leq f$ and $[f]g\leq g$ hold, thus $[f]g\in[0,f]\cap[0,g]$. Now, let $h\in\rpa$ be a functional such that $h\leq f$ and $h\leq g$. This implies immediately that $h\ll f$ and $h\leq g$, and hence $h\leq[f]g$ by the maximality property of $[f]g$. And similarly, if $[g]f\leq[f]g$, then $f\wedge g=[g]f$.
\end{proof}
As was proved in \cite{M-G} the set of effects  is not a lattice, but the infimum of a sharp effect with any other effect exists. The previous sufficient condition combined with Theorem \ref{T: exchar} allows us to prove an analogous result for representable functionals.
\begin{corollary}
Let $u$ be an extreme point of the interval $[0,f]$. Then the infimum $u\wedge h$ exists for all $h\in[0,f]$, and 
\begin{align*} 
u\wedge h=[u]h.
\end{align*}
If $u$ and $h$ are both extreme points, then 
\begin{align*}u\wedge h=[u]h=[h]u.
\end{align*}
\end{corollary}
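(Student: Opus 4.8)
The plan is to deduce both assertions from Theorem \ref{T: inf_sufficient} by showing that the two absolutely continuous parts $[u]h$ and $[h]u$ are comparable. Concretely, I would prove $[u]h\leq[h]u$, so that the comparability hypothesis of Theorem \ref{T: inf_sufficient} is met and consequently $u\wedge h$ exists with $u\wedge h=\min\{[u]h,[h]u\}=[u]h$.

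The argument proceeds in three short steps. First, since $h\leq f$, the monotonicity of the operation $[\cdot]\,\cdot$ in its second argument gives $[u]h\leq[u]f$; and because $u$ is an extreme point of $[0,f]$, Theorem \ref{T: exchar}(iii) yields $[u]f=u$, hence $[u]h\leq u$. Second, $[u]h\leq h$ holds by the defining property of $[u]h$, so $[u]h$ is $h$-dominated and therefore $h$-absolutely continuous. Third, I apply the maximality clause of Theorem \ref{T: Ldt-rf} to the $h$-Lebesgue decomposition of $u$: the functional $[u]h\in\rpa$ satisfies $[u]h\leq u$ and $[u]h\ll h$, whence $[u]h\leq[h]u$. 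Theorem \ref{T: inf_sufficient} then gives $u\wedge h=[u]h$.

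For the final statement, if $h$ is also an extreme point of $[0,f]$, then interchanging the roles of $u$ and $h$ in the above yields $[h]u\leq[u]h$ as well, so $[u]h=[h]u$ and $u\wedge h=[u]h=[h]u$. There is no real obstacle here: every step is a direct invocation of an earlier result, and the only point requiring attention is feeding the monotonicity inequality $[u]h\leq[u]f$ and the maximality property of Theorem \ref{T: Ldt-rf} into the correct slots (second argument for the former, the pair "$\leq u$ and $\ll h$" for the latter).
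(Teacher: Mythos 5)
Your proposal is correct and follows essentially the same route as the paper: establish $[u]h\leq u$ via $h\leq f$ and $[u]f=u$ from Theorem \ref{T: exchar}(iii), note $[u]h\ll h$ (since $[u]h\leq h$), conclude $[u]h\leq[h]u$, and invoke Theorem \ref{T: inf_sufficient}, with symmetry handling the case when both are extreme points. The paper phrases the key inequality as $[u]h=[h]([u]h)\leq[h]u$ rather than citing the maximality clause of Theorem \ref{T: Ldt-rf}, but this is the same argument in different words.
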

\begin{proof}
According to Theorem \ref{T: inf_sufficient} it is enough to show that $[u]h\leq [h]u$. On the one hand, we obtain from $h\leq f$ and Theorem \ref{T: exchar} (iii) that $[u]h\leq [u]f=u$. Since $[u]h$ is $h$-absolutely continuous, we have
\begin{align*}
[u]h=[h]([u]h)\leq[h]u,
\end{align*}
which means that $u\wedge h=[u]h$. On the other hand, if both $u$ and $h$ are extreme point, we conclude by symmetry that
\begin{align*}
[u]h=u\wedge h=h\wedge u =[h]u.
\end{align*}
\end{proof}

Remark that $[f]g\leq[g]f$ holds precisely when $[f]g\leq f$, thus the sufficient condition in Theorem \ref{T: inf_sufficient} can be stated also as follows: if either $[f]g\leq f$ or $[g]f\leq g$  then the infimum $f\wedge g$ exists.

As indicated above, comparability of $[f]g$ and $[g]f$ is in general not a necessary condition of the existence of the infimum: 
\begin{example}\label{Ex:commutative}
Let $\alg$ be a unital commutative $C^*$-algebra. Then, by the commutative Gelfand-Naimark theorem and the Riesz representation theorem, each positive (hence representable) functional $f$  on $\alg$ is of the form 
\begin{align*}
f(a)=\int \widehat{a}~d\mu_f,\qquad a\in\alg,
\end{align*}
where $\widehat{a}$ is the image of $a$ under the Gelfand transform, $\mu_f$ is a positive, regular Borel measure and the integral is taken over the maximal ideal space of $\alg$. It is well known that nonnegative finite measures on a given $\sigma$-algebra form a lattice. Recall that the infimum of two measures $\mu$ and $\nu$ can be calculated by the formula 
\begin{align*}
(\mu\wedge\nu)(E):=\inf_{F}\big(\mu(E\cap F)+\nu(E\setminus F)\big).
\end{align*}
The Riesz representation theorem also states that the mapping $f\mapsto \mu_f$ is surjective and bipositive, i.e., $f\geq0$ if and only if $\mu_f\geq0$. In particular, the infimum of two positive functionals $f,g\in\rpa$ always exists as the inverse image of $\mu_f\wedge\mu_g$ along this map.
\end{example}
Next we are going to analyze a highly noncommutative example, the (non-unital) $C^*$-algebra of compact operators. This is a good example to demonstrate that the condition given in Theorem \ref{T: inf_sufficient} can be also necessary.
\begin{example}\label{Ex:compact}
Let $\hil$ be a complex Hilbert space and denote by $\kh$ the $C^*$-algebra of all compact operators on $\hil$. Then the topological dual of $\kh$ can be identified with $\trh$, the set of all trace class operators on $\hil$. In particular, any positive (hence representable) functional $f$ on $\kh$ is represented by a positive trace class operator $F\in\trh$ in the sense that 
\begin{align*}
f(T)=\trace(FT), \qquad T\in\kh,
\end{align*}
see \cite{KR1}. Let denote this representation by $\Phi$, i.e.,
\begin{align}\label{E: Phi}
\Phi:\kh^{\sharp}\to\trh;\qquad\Phi(f):=F.
\end{align}
First observe that $\Phi$ is surjective and bipositive ($f\geq0$ if and only if $F\geq0$), thus the infimum of $f,g\in\kh^{\sharp}$ exists in $\kh^{\sharp}$ if and only if the infimum of $F,G$ exists in $\trh$. Since $F,G\in\trh$ one has that $$[0,F]\cap[0,G]=[0,F]\cap[0,G]\cap\trh,$$  the infimum $F\wedge G$ exists if and only if $[F]G$ and $[G]F$ are comparable, according to Ando \cite[Theorem 4]{Ando-inf}. Since the image of $[f]g$ under the representing map is just $[F]G$ (see  Theorem \ref{T:patak} below), we deduce that $f\wedge g$ exists if and only if $[f]g$ and $[g]f$ are comparable.  
\end{example}
Since the missing step in the preceding example will be used also in the next section, we formulate it separately.
\begin{theorem}\label{T:patak}
Let $\hil$ be a Hilbert space and let denote by $\kh$ the $C^{\ast}$-algebra of compact operators on $\hil$. For the representing map $\Phi:\kh^{\sharp}\to\trh$
\begin{align}
f(T)=\trace(\Phi(f)T),\qquad T\in\kh
\end{align}
we have the following assertions.
\begin{enumerate}[\upshape a)]
 \item $f,g\in\kh^{\sharp}$ are mutually singular if and only if $\Phi(f),\Phi(g)$ are so;
 \item $f$ is $g$-absolutely continuous if and only if $\Phi(f)$ is $\Phi(g)$-absolutely continuous. %(as operators, in the sense of Ando \cite{Ando}).
\end{enumerate}
In particular, $\Phi([g]f)=[\Phi(g)]\Phi(f)$.
\end{theorem}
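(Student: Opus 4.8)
The plan is to transfer the definitions of singularity and absolute continuity from the functional side to the operator side via the representing map $\Phi$, using the explicit description $f(T)=\trace(FT)$. The key structural observation is that $\Phi$ is an order isomorphism: since every positive trace class operator $F$ induces a representable functional $T\mapsto\trace(FT)$ on $\kh$, and conversely the topological dual of $\kh$ is exactly $\trh$ (see \cite{KR1}), the map $\Phi$ is a bijection, it is additive and positive-homogeneous, and it is bipositive ($f\geq 0 \Leftrightarrow F\geq 0$), hence $f\leq g \Leftrightarrow \Phi(f)\leq \Phi(g)$. Consequently $\Phi$ maps the order interval $[0,f]$ in $\kh^{\sharp}$ bijectively onto $[0,\Phi(f)]\cap\trh$, and since $0\leq A\leq F$ with $F$ trace class forces $A$ trace class, this interval is just $[0,\Phi(f)]$ computed in $\bh$. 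In other words, \emph{domination} is preserved in both directions by $\Phi$.

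For part a), I would argue: $f\perp g$ means that $h\leq f$ and $h\leq g$ imply $h=0$ for representable $h$; applying $\Phi$ and the bipositivity just discussed, this says precisely that $0\leq A\leq F$ and $0\leq A\leq G$ imply $A=0$, which is exactly the Hilbert-space definition of $F\perp G$ used by Ando \cite{Ando}. For part b), I would use the characterization of absolute continuity supplied by Theorem \ref{T:closabs}: $f\ll g$ iff $f$ is the pointwise limit of a monotone increasing sequence $f_n\in\rpa$ with $f_n\leq\alpha_n g$. Under $\Phi$, monotone increasing sequences of functionals correspond to monotone increasing sequences of positive trace class operators, the domination $f_n\leq\alpha_n g$ corresponds to $F_n\leq\alpha_n G$, and pointwise convergence $f_n(T)\to f(T)$ for all $T\in\kh$ translates (taking $T$ rank one) into weak, hence — by monotonicity — strong convergence $F_n\to F$. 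Since Ando's notion of absolute continuity for operators \cite{Ando} is likewise \lq\lq $F$ is a strong limit of a monotone increasing sequence dominated by multiples of $G$", this gives $f\ll g \Leftrightarrow F\ll G$ directly. One must check the reverse translation too: a monotone increasing sequence $F_n\leq\alpha_n G$ converging strongly to $F$ consists of trace class operators (being $\leq\alpha_n G$), so $f_n:=\Phi^{-1}(F_n)$ are representable, and $f_n(T)=\trace(F_nT)\to\trace(FT)=f(T)$ by strong convergence and $T$ trace class, so condition (ii) of Theorem \ref{T:closabs} holds for $f$.

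Finally, the identity $\Phi([g]f)=[\Phi(g)]\Phi(f)$ follows from the Lebesgue decomposition theorem together with its extremality/maximality clause. From Theorem \ref{T: Ldt-rf}, $[g]f$ is the largest $h\in\rpa$ with $h\leq f$ and $h\ll g$; applying $\Phi$ and parts a)--b) (via $\Phi(f-[g]f)=F-\Phi([g]f)$ and the fact that $\Phi$ preserves order and the two relations), $\Phi([g]f)$ is a positive operator $\leq F$, absolutely continuous with respect to $G$, and maximal with these properties among trace class operators — but by the interval remark above, maximal among all positive operators $\leq F$. Ando's decomposition \cite{Ando} characterizes $[G]F$ by exactly this maximality property, so $\Phi([g]f)=[G]F$. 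The main obstacle I anticipate is the careful passage between pointwise convergence of functionals on $\kh$ and strong (operator) convergence of the representing trace class operators — one needs that testing against rank-one operators detects weak convergence, and then that monotone plus weakly convergent implies strongly convergent and that the strong limit again lies in the trace class (here the uniform bound $F_n\leq\alpha_n G$, or better the monotone-limit argument bounding $\trace F_n\leq\trace F$, is what saves the day); everything else is a routine unwinding of definitions through the order isomorphism $\Phi$.
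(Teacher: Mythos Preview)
Your approach is essentially identical to the paper's: bipositivity of $\Phi$ for part a), the characterization from Theorem \ref{T:closabs} together with rank-one testing for part b), and the maximality clause of Theorem \ref{T: Ldt-rf} (applied on both sides) for the identity $\Phi([g]f)=[\Phi(g)]\Phi(f)$.

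There is, however, one genuine slip in the reverse direction of b). You write ``$f_n(T)=\trace(F_nT)\to\trace(FT)=f(T)$ by strong convergence and $T$ trace class'', but $T$ here ranges over the \emph{compact} operators $\kh$, not over $\trh$, so strong convergence $F_n\to F$ does not by itself give $\trace(F_nT)\to\trace(FT)$. The paper closes this gap by proving the stronger statement that $\trace(F-F_n)\to 0$, i.e.\ $\|f-f_n\|\to 0$: fixing an orthonormal basis $E$ and a finite subset $E_0$, one uses $\sip{F_ne}{e}\uparrow\sip{Fe}{e}$ together with the tail estimate $\sum_{e\in E\setminus E_0}\sip{Fe}{e}<\varepsilon/2$ to get $\trace(F-F_n)<\varepsilon$ for large $n$; then $|\trace((F-F_n)T)|\leq\|T\|\,\trace(F-F_n)\to 0$ for every bounded $T$. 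You essentially anticipate this repair in your closing paragraph (``the monotone-limit argument bounding $\trace F_n\leq\trace F$ is what saves the day''), so once the misidentification of $T$ is corrected your argument goes through exactly as in the paper.
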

\begin{proof}
Assertion a) follows directly from the fact that $\Phi$ is surjective and bipositive. In order to see b), for $x,y\in\hil$ consider the rank one operator $x\otimes y$, defined by $(x\otimes y)(z)=\sip{z}{y}x, z\in\hil$. An immediate calculation shows that 
\begin{align*}
\trace(x\otimes y)=\sip{x}{y},\qquad \textrm{and} \qquad A\circ(x\otimes y)=(Ax)\otimes y,
\end{align*}
for any $A\in\bh$. Hence we obtain 
\begin{equation}\label{E:Fxx}
\sip{\Phi(f)x}{x}=f(x\otimes x)
\end{equation}
for any $f\in\kh^{\sharp}$ and $x\in \hil$. Assume now that $f$ is $g$-absolutely continuous, and consider a monotonically increasing sequence $\seq{f}$ in $\kh^{\sharp}$ such that $f_n\leq \alpha_ng$ for some $\alpha_n\geq 0$ and that $f_n\to f$ pointwise on $\kh$. Then, by bipositivity, $(\Phi(f_n))_{\nen}$ is also  monotonically increasing, $\Phi(f_n)\leq \alpha_n\Phi(g)$, and by \eqref{E:Fxx}, 
\begin{align*}
\sip{\Phi(f_n)x}{x}\to \sip{\Phi(f)x}{x}, 
\end{align*}
for all $x\in\hil$. This implies that $\Phi(f)$ is $\Phi(g)$-absolutely continuous.  Assume conversely that $\Phi(f)$ is $\Phi(g)$-absolutely continuous. Again by bipositivity, there exists a monotonically increasing sequence $\seq{f}$ in $\rpa$ such that $f_n\leq \alpha_ng$ for some $\alpha_n\geq 0$ and that $\sip{\Phi(f_n)x}{x}\to \sip{\Phi(f)x}{x}$ holds for any $x\in\hil$.  To conclude that $f$ is a $g$-absolutely continuous functional, it is enough to prove that $f_n(T)\to f(T)$ for each compact operator $T$. Moreover we prove that 
$f_n\to f$ in functional norm. With this end, let $\varepsilon>0$ and choose an orthonormal basis $E$ in $\hil$. Fix a finite subset $E_0$ of $E$ and an integer $N$ such that 
\begin{align*}
\trace(\Phi(f))-\sum_{e\in E_0}\sip{\Phi(f)e}{e}<\frac{\varepsilon}{2}\quad\mbox{and}\quad
\sum_{e\in E_0}\sip{(\Phi(f)-\Phi(f_n))e}{e}<\frac{\varepsilon}{2}.
\end{align*}
Then for any integer $n$ with $n\geq N$
\begin{align*}
\trace(\Phi(f-f_n))&\leq \trace(\Phi(f-f_N))\\ &\leq \sum_{e\in E\setminus E_0} \sip{\Phi(f)e}{e}+\sum_{e\in E_0}\sip{(\Phi(f)-\Phi(f_n))e}{e}<\frac{\varepsilon}{2}+\frac{\varepsilon}{2}=\varepsilon.
\end{align*}
Hence, we have $\|f_n-f\|=\trace(\Phi(f-f_n))\to0$. It remains only to show that $\Phi([g]f)=[\Phi(g)]\Phi(f)$. It follows immediately from part b), the bipositivity of $\Phi$, and the maximality of the almost dominated part (both in the operator and functional case). Indeed, $\Phi([g]f)$ is $\Phi(g)$-absolutely continuous because $[g]f$ is $g$-absolutely continuous. Hence $\Phi([g]f)\leq[\Phi(g)]\Phi(f)$ by maximality.
To prove the converse implication let us denote by $\Psi$ the inverse of $\Phi$, and observe that $\Psi([\Phi(g)]\Phi(f))$ is $g$-absolutely continuous. And hence, again by maximality, $$\Psi([\Phi(g)]\Phi(f))\leq[g]f.$$ Applying $\Phi$ to both sides gives $[\Phi(g)]\Phi(f)\leq\Phi([g]f)$, completing the proof.
\end{proof}
To illustrate how sophisticated the order structure of $\rpa$ may be, let us consider the following example.
\begin{example}
Let $X$ be a compact Hausdorff space and $\hil$ a Hilbert space. Denote by $C(X)$ the set of all continuous functions from $X$ to $\dupC$, and by $\kh$ the set of all compact operators on $\hil$. Set $\alg$ for their direct sum, that is, $\alg:=C(X)\oplus \kh$, which becomes a $C^*$-algebra with respect to the usual operations, involution, and norm. Then $\alg'=C(X)'\oplus \kh'$. It is easy to check from Example \ref{Ex:commutative} that the infimum  $(f\oplus 0)\wedge (g\oplus0)$ for $f,g\in C(X)^{\sharp}$ always exists. Clearly, $(f\oplus 0)\wedge (g\oplus0)=(f\wedge g)\oplus 0$ in this case. On the other hand, $(0\oplus f)\wedge(0\oplus g)$ for $f,g\in \kh^{\sharp}$ may exist if and only if $f\wedge g$ in $\kh^{\sharp}$ exists. In view of Example \ref{Ex:compact}, this happens only if $[f]g$ and $[g]f$ are comparable. 
\end{example}
\section{Uniqueness of the Lebesgue decomposition}\label{S: uniqueness}

As it has been pointed out by Kosaki \cite{Kosaki}, the Lebesgue-type decomposition of representable functionals is not necessarily unique even over von Neumann algebras.
Nevertheless, it is possible to give a nontrivial sufficient condition for the uniqueness in terms of the regular part. As we shall see, this property can be necessary in some cases.
\begin{theorem}\label{T:sufficient_leb}
Let $f$ and $g$ be representable functionals on $\alg$. If $[g]f\leq cg$ holds for some $c\geq0$ then the Lebesgue-type decomposition of $f$ with respect to $g$ is unique.
\end{theorem}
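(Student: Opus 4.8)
The plan is to exploit the maximality of $[g]f$ from Theorem~\ref{T: Ldt-rf}, the hereditary nature of representability recalled in Section~\ref{S: preliminaries}, and the fact that $g$-dominatedness implies $g$-absolute continuity. We may assume $c>0$, since $c=0$ forces $[g]f=0$, i.e. $f\perp g$, where the decomposition $f=0+f$ is obviously the only one. Let $f=f_1+f_2$ be an arbitrary $g$-Lebesgue decomposition, so $f_1,f_2\in\rpa$, $f_1\ll g$ and $f_2\perp g$; the goal is to show $f_1=[g]f$ (equivalently $f_2=f-[g]f$).

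First I would note that $f_1\leq f$ and $f_1\ll g$, so the maximality clause of Theorem~\ref{T: Ldt-rf} gives $f_1\leq[g]f$. Since $f_1$ is representable it is Hilbert bounded, and $f_1\leq[g]f\in\rpa$; hence the hereditary property of representability ($h$ Hilbert bounded, $h\leq g\in\rpa$ $\Rightarrow$ $h,g-h\in\rpa$) shows that $k:=[g]f-f_1$ is again a representable functional, with $k\geq0$. A direct computation gives
\[
f_2-k=(f-f_1)-\bigl([g]f-f_1\bigr)=f-[g]f\geq0,
\]
using $[g]f\leq f$, so that $0\leq k\leq f_2$.

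Now $f_2\perp g$ together with $k\leq f_2$ forces $k\perp g$: indeed, any representable $h$ with $h\leq k$ and $h\leq g$ also satisfies $h\leq f_2$ and $h\leq g$, whence $h=0$. On the other hand $k\leq[g]f\leq cg$, so $k$ is $g$-dominated and therefore $g$-absolutely continuous. By the corollary to Proposition~\ref{P: reg-sing-characterization}, a functional that is simultaneously $g$-absolutely continuous and $g$-singular is identically zero, so $k=0$, i.e. $f_1=[g]f$ and $f_2=f-[g]f$. This establishes uniqueness.

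The argument is short and presents essentially no obstacle once the pieces are assembled; the one point deserving attention is the verification that $k=[g]f-f_1$ is itself representable, so that the notions of singularity and absolute continuity (and the hypothesis $k\leq cg$) may legitimately be applied to it — this is precisely where the hereditary property of representability is invoked.
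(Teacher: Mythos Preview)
Your proof is correct and follows essentially the same line as the paper's own argument: both use maximality of $[g]f$ to obtain $f_1\leq[g]f$, then show that $k=[g]f-f_1$ is simultaneously $g$-singular (via $k\leq f_2$) and $g$-absolutely continuous (via $k\leq[g]f\leq cg$), forcing $k=0$. You are simply a bit more explicit about why $k\in\rpa$ and why $k\leq f_2$ implies $k\perp g$, points the paper leaves implicit.
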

\begin{proof}
Assume that $f=f_1+f_2$ is a $g$-Lebesgue decomposition of $f$. By the maximality of $[g]f$ we obtain that $[g]f-f_1\in\rpa$, and the inequalities
\begin{align*}
0\leq [g]f-f_1\leq [g]f \leq cg\qquad\mbox{\textrm{and}} \qquad [g]f-f_1\leq f-f_1=f_2
\end{align*}
imply that $[g]f-f_1$ is both absolutely continuous and singular. This yields $[g]f=f_1$.
\end{proof}
 We remark that a similar argument shows that if absolute continuity is a hereditary property (i.e., $f\ll g$ and $h\leq f$ imply $h\ll g$), then the Lebesgue decomposition is unique.
Based on the preceding theorem, the uniqueness problem can be easily solved over finite dimensional $^*$-algebras.
\begin{example}\label{Ex:finitedim}
Let $\alg$ be a finite dimensional $^*$-algebra. Then for any $f\in\rpa$ the representation Hilbert space $\hilf$, obtained along the GNS construction, is finite dimensional. Hence, for $f,g\in\rpa$, the mapping $$\pi_g(a)\zeta_g\mapsto\pi_f(a)\zeta_f$$ from $\hilg$ to $\hilf$ is closable if and only if it is continuous. This means that $f\ll g$ if and only if $f\leq cg$ for some $c\geq 0$. Hence, in view of Theorem \ref{T:sufficient_leb}, the Lebesgue-type decomposition on $\rpa$ is unique.  
\end{example}
In the next example we shall see that uniqueness may also appear in infinite dimension, namely over $C(X)$. In contrast to the finite dimensional case, the sufficient condition of Theorem \ref{T:sufficient_leb} is redundant. 

\begin{example}
Let $\alg$ be a unital commutative $C^{\ast}$ algebra. Since there is a bipositive isometric isomorphism  between representable functionals and nonnegative regular Borel measures (see Example \ref{Ex:commutative}),
absolute continuity is a hereditary property in this case. Indeed, assume that $f$ is $g$-absolutely continuous, and $h\leq f$. According to Theorem \ref{T:closabs}, the representing map $f\mapsto \mu_f$ gives a monotonically decreasing sequence of measures such that $\mu_{f_n}\uparrow\mu_f$, and $\mu_{f_n}$ is $\mu_g$-dominated for all $n\in\mathbb{N}$. Consequently, the inverse image of the sequence $\mu_{h_n}:=\mu_h\wedge\mu_{f_n}$ guarantees that $h$ is $g$-absolutely continuous.
\end{example}
The next example shows that the sufficient condition of Theorem \ref{T:sufficient_leb} may be necessary  in certain infinite dimensional cases.

\begin{example}
For $f,g\in\kh^{\sharp}$ we state that the $g$-Lebesgue decomposition of $f$ is unique precisely when $[g]f\leq cg$ for some $c\geq 0$. Indeed, letting $F=\Phi(f)$ and $G=\Phi(g)$ as in \eqref{E: Phi}, Theorem \ref{T:patak} says that the $g$-Lebesgue decomposition of $f$ is unique if and only if the $G$-Lebesgue decomposition of $F$ is unique in $\bh$. On account of \cite[Theorem 6]{Ando},
the latter assertion is equivalent to $[G]F\leq cG$, and hence, by bipositivity of $\Phi$ to $[g]f\leq cg$.
\end{example}

As the preceding example demonstrates, the Lebesgue decomposition of representable functionals is not necessarily unique over (noncommutative infinite dimensional) $C^{*}$-algebras. Next we are going to prove a similar characterization over $\hsh$, which shows that the Lebesgue decomposition is not necessarily unique even over Hilbert algebras. The main tools are  Ando's uniqueness result, again, and a counterpart of Theorem \ref{T:patak}.
\begin{example}
Let $\hil$ be a Hilbert space and denote by $\hsh$ the Hilbert algebra of Hilbert--Schmidt operators, endowed with inner product
\begin{eqnarray}\label{E:siphsh}
\sip{S}{T}_2=\trace(T^*S),\qquad S,T\in\hsh.
\end{eqnarray}
According to the Riesz representation theorem, any bounded positive functional $f$ on $\hsh$ is of the form
\begin{eqnarray}\label{E:f_T}
f(T)=\sip{T}{F}_2=\trace(FT),\qquad T\in\hsh,
\end{eqnarray}
for some uniquely determined positive linear operator $F\in\hsh$. The mapping $f\mapsto F$ is bipositive, i.e., $f_1\leq f_2$ if and only if $F_1\leq F_2$. Note that the positive functional $f$ is representable if and only if $F$ is a trace class operator, or equivalently, $F^{1/2}\in\hsh$. 
Indeed, if  $F\in\trh$ then
\begin{align*}
\abs{f(T)}^2&= \abs{\sip{TF^{1/2}}{F^{1/2}}_2}^2\leq\|TF^{1/2}\|_2^2\|F^{1/2}\|_2^2=\trace(F) f(T^*T)
\end{align*}
for all $T\in\hsh$, hence $f$ is representable. For the converse suppose $f$ is representable, that is,
\begin{align*}
\abs{f(T)}^2\leq Mf(T^*T),\qquad T\in\hsh
\end{align*}
holds for some  $M\geq0$.  Choose an orthonormal basis $E$ of  $\hil$. Then for any finite set $E_0$ of $E$, denoting by $P$  the orthogonal projection of $\hil$ onto the subspace spanned by $E_0$ we get 
\begin{align*}
\sum_{e\in E_0}\sip{Fe}{e}=\trace(FP)=f(P)=f(P^*P).
\end{align*}
Hence, by representability, 
\begin{align*}
\left(\sum_{e\in E_0}\sip{Fe}{e}\right)^2=\abs{f(P)}^2\leq Mf(P^*P)=M \sum_{e\in E_0}\sip{Fe}{e}.
\end{align*}
This implies 
\begin{eqnarray*}
\sum_{e\in E_0}\sip{Fe}{e}\leq M,
\end{eqnarray*}
whence we get $F\in\trh$ and $\trace(F)\leq M$.

We see therefore that $\hsh^{\sharp}$ may be identified with positive elements of $\trh$ along the bipositive map $f\mapsto F$. Imitating the proof of Theorem \ref{T:patak} it can be shown that this map preserves absolute continuity and singularity as well. Hence, for $f,g\in\hsh^{\sharp}$ the image of $[g]f$ is just $[G]F$. Applying Ando's uniqueness result \cite[Theorem 6]{Ando}, the Lebesgue decomposition of $f$ with respect to $g$ is unique if and only if $[g]f\leq cg$ for some $c\geq0$. 
\end{example}
%%%%%%%%%%%%%%%%%%%%%%%
Our final example demonstrates that Lebesgue decomposition is non-unique over the von Neumann algebra $\bh$. In accordance with Theorem \ref{T:closabs} and Ando's result \cite[Theorem 6]{Ando} we  show that non-uniqueness occurs whenever $\hil$ is infinite dimensional.

%the Lebesgue decomposition for positive functionals on $\bh$ is never unique when $\hil$ is infinite dimensional.
\begin{example}
Let $\hil$ be a Hilbert space. Given two positive trace class operators $F,G\in\trh$ we set
\begin{equation}\label{E:normal}
f(T):=\trace(FT), \qquad g(T):=\trace(GT),\qquad T\in\bh.
\end{equation}
Then clearly, $f,g$ are positive (hence representable) functionals on $\bh$. Note that not every positive functional on $\bh$ derives from a trace class operator. Functionals of the form \eqref{E:normal} are called normal functionals. The reader can easily verify that $f\leq g$ if and only if $F\leq G$. Furthermore, any positive functional majorized by a normal one is normal itself: if $k\in\bh^{\sharp}$, $k\leq f$, then $k(T)=\trace(KT)$ for some (unique) positive operator $K\in\trh$. According to these observations and repeating the proof of Theorem \ref{T:patak} one can easily deduce that $F$ and $G$ are mutually singular if and only if $f$ and $g$ are so. Similarly, $F$ is $G$-absolutely continuous if and only if $f$ is $g$-absolutely continuous. In particular, the $g$-Lebesgue decomposition of $f$ is unique precisely if the $G$-Lebesgue decomposition of $F$ is unique.

After this brief comment let us construct a counterexample. Assume $\hil$ is infinite dimensional and consider an orthonormal sequence $\seq{e}$ in it. Let $\seq{\alpha}, \seq{\beta}$ be two monotone decreasing sequences in $\ell^1$ with positive coefficients such that $\alpha_n/\beta_n\to\infty$. Set
\begin{equation}
Fx:=\sum_{n=1}^{\infty} \alpha_n\sip{x}{e_n}e_n,\qquad Gx:=\sum_{n=1}^{\infty} \beta_n\sip{x}{e_n}e_n,\qquad x\in\hil,
\end{equation}
and define $f,g\in\bh^{\sharp}$ by \eqref{E:normal}. Letting 
\begin{align*}
F_nx:=\sum_{k=1}^n \alpha_k\sip{x}{e_k}e_k
\end{align*}
we have $F_n\leq F_{n+1}\to F$ in operator norm and also $F_n\leq \dfrac{\alpha_n}{\beta_n} G$. Hence $F$ is $G$-absolutely continuous, i.e., $[G]F=F$. On the other hand, $F\leq cG$ is impossible because  $\alpha_n/\beta_n\to\infty$. The $G$-Lebesgue decomposition of $F$ is therefore not unique, in accordance with \cite[Theorem 6]{Ando}. Hence the $g$-Lebesgue decomposition of $f$ fails to be unique as well. 
\end{example}

\end{document}